\let\oldeqref\eqref
\RenewDocumentCommand\eqref{s m}{%
  \IfBooleanTF#1%
  {\textup{\tagform@{\ref*{#2}}}}% If a star is seen
  {\oldeqref{#2}}%                 If no star is seen
}
\newcommand{\Bir}{{\operatorname{Bir}}}
\newcommand{\sC}{{\mathcal C}}
\newcommand{\sL}{{\mathcal L}}
\newcommand{\sM}{{\mathcal M}}
\newcommand{\sR}{{\mathcal R}}
\newcommand{\sS}{{\mathcal S}}
\newcommand{\sX}{{\mathcal X}}
\newcommand{\C}{{\mathbb C}}
\newcommand{\N}{{\mathbb N}}
\newcommand{\Q}{{\mathbb Q}}
\newcommand{\Z}{{\mathbb Z}}
\newcommand{\NS}{\operatorname{NS}}
\newcommand{\Pic}{\operatorname{Pic}}
\newcommand{\rk}{{\rm rk}}
\renewcommand{\to}[1][]{\xrightarrow{\ #1\ }}
\newcommand*{\da@rightarrow}{\mathchar"0\hexnumber@\symAMSa 4B }
\newcommand*{\da@leftarrow}{\mathchar"0\hexnumber@\symAMSa 4C }
\newcommand*{\xdashrightarrow}[2][]{%
  \mathrel{%
    \mathpalette{\da@xarrow{#1}{#2}{}\da@rightarrow{\,}{}}{}%
  }%
}
\newcommand{\xdashleftarrow}[2][]{%
  \mathrel{%
    \mathpalette{\da@xarrow{#1}{#2}\da@leftarrow{}{}{\,}}{}%
  }%
}
\newcommand*{\da@xarrow}[7]{%
  % #1: below
  % #2: above
  % #3: arrow left
  % #4: arrow right
  % #5: space left 
  % #6: space right
  % #7: math style 
  \sbox0{$\ifx#7\scriptstyle\scriptscriptstyle\else\scriptstyle\fi#5#1#6\m@th$}%
  \sbox2{$\ifx#7\scriptstyle\scriptscriptstyle\else\scriptstyle\fi#5#2#6\m@th$}%
  \sbox4{$#7\dabar@\m@th$}%
  \dimen@=\wd0 %
  \ifdim\wd2 >\dimen@
    \dimen@=\wd2 %   
  \fi
  \count@=2 %
  \def\da@bars{\dabar@\dabar@}%
  \@whiledim\count@\wd4<\dimen@\do{%
    \advance\count@\@ne
    \expandafter\def\expandafter\da@bars\expandafter{%
      \da@bars
      \dabar@ 
    }%
  }%  
  \mathrel{#3}%
  \mathrel{%   
    \mathop{\da@bars}\limits
    \ifx\\#1\\%
    \else
      _{\copy0}%
    \fi
    \ifx\\#2\\%
    \else
      ^{\copy2}%
    \fi
  }%   
  \mathrel{#4}%
}
\newtheoremstyle{citing}% name
  {}%      Space above, empty = `usual value'
  {}%      Space below
  {\itshape}% Body font
  {}%         Indent amount (empty = no indent, \parindent = para indent)
  {\bfseries}% Thm head font
  {\textbf{.}}%        Punctuation after thm head
  {.5em}%     Space after thm head: " " = normal interword space;
\theoremstyle{plain}
\newtheorem{theorem}[subsection]{Theorem}
\newtheorem{lemma}[subsection]{Lemma}
\newtheorem{corollary}[subsection]{Corollary}
\newtheorem{proposition}[subsection]{Proposition}
\theoremstyle{remark}
\theoremstyle{definition}
\newtheorem{conjecture}[subsection]{Conjecture}
\newtheorem{hypothesis}[subsection]{Hypothesis}
\numberwithin{equation}{section}
\theoremstyle{remark}
\newtheorem{remark}[subsection]{Remark}
\newtheorem*{claim}{Claim}
\theoremstyle{citing}
\newsavebox\myboxA
\newsavebox\myboxB
\newlength\mylenA
\newcommand*\xtilde[2][0.8]{%
    \sbox{\myboxA}{$\m@th#2$}%
    \setbox\myboxB\null% Phantom box
    \ht\myboxB=\ht\myboxA%
    \dp\myboxB=\dp\myboxA%
    \wd\myboxB=#1\wd\myboxA% Scale phantom
    \sbox\myboxB{$\m@th\widetilde{\copy\myboxB}$}%  Overlined phantom
    \setlength\mylenA{\the\wd\myboxA}%   calc width diff
    \addtolength\mylenA{-\the\wd\myboxB}%
    \ifdim\wd\myboxB<\wd\myboxA%
       \rlap{\hskip 0.5\mylenA\usebox\myboxB}{\usebox\myboxA}%
    \else
        \hskip -0.5\mylenA\rlap{\usebox\myboxA}{\hskip 0.5\mylenA\usebox\myboxB}%
    \fi}
\newbox\usefulbox
\def\getslant #1{\strip@pt\fontdimen1 #1}
\def\xxtilde #1{\mathchoice
 {{\setbox\usefulbox=\hbox{$\m@th\displaystyle #1$}%
    \dimen@ \getslant\the\textfont\symletters \ht\usefulbox
    \divide\dimen@ \tw@ 
    \kern\dimen@ 
    \xtilde{\kern-\dimen@ \box\usefulbox\kern\dimen@ }\kern-\dimen@ }}
 {{\setbox\usefulbox=\hbox{$\m@th\textstyle #1$}%
    \dimen@ \getslant\the\textfont\symletters \ht\usefulbox
    \divide\dimen@ \tw@ 
    \kern\dimen@ 
    \xtilde{\kern-\dimen@ \box\usefulbox\kern\dimen@ }\kern-\dimen@ }}
 {{\setbox\usefulbox=\hbox{$\m@th\scriptstyle #1$}%
    \dimen@ \getslant\the\scriptfont\symletters \ht\usefulbox
    \divide\dimen@ \tw@ 
    \kern\dimen@ 
    \xtilde{\kern-\dimen@ \box\usefulbox\kern\dimen@ }\kern-\dimen@ }}
 {{\setbox\usefulbox=\hbox{$\m@th\scriptscriptstyle #1$}%
    \dimen@ \getslant\the\scriptscriptfont\symletters \ht\usefulbox
    \divide\dimen@ \tw@ 
    \kern\dimen@ 
    \xtilde{\kern-\dimen@ \box\usefulbox\kern\dimen@ }\kern-\dimen@ }}%
 {}}
\newcommand*\xoverline[2][0.75]{%
    \sbox{\myboxA}{$\m@th#2$}%
    \setbox\myboxB\null% Phantom box
    \ht\myboxB=\ht\myboxA%
    \dp\myboxB=\dp\myboxA%
    \wd\myboxB=#1\wd\myboxA% Scale phantom
    \sbox\myboxB{$\m@th\overline{\copy\myboxB}$}%  Overlined phantom
    \setlength\mylenA{\the\wd\myboxA}%   calc width diff
    \addtolength\mylenA{-\the\wd\myboxB}%
    \ifdim\wd\myboxB<\wd\myboxA%
       \rlap{\hskip 0.5\mylenA\usebox\myboxB}{\usebox\myboxA}%
    \else
        \hskip -0.5\mylenA\rlap{\usebox\myboxA}{\hskip 0.5\mylenA\usebox\myboxB}%
    \fi}
\def\xxoverline #1{\mathchoice
 {{\setbox\usefulbox=\hbox{$\m@th\displaystyle #1$}%
    \dimen@ \getslant\the\textfont\symletters \ht\usefulbox
    \divide\dimen@ \tw@ 
    \kern\dimen@ 
    \overline{\kern-\dimen@ \box\usefulbox\kern\dimen@ }\kern-\dimen@ }}
 {{\setbox\usefulbox=\hbox{$\m@th\textstyle #1$}%
    \dimen@ \getslant\the\textfont\symletters \ht\usefulbox
    \divide\dimen@ \tw@ 
    \kern\dimen@ 
    \xoverline{\kern-\dimen@ \box\usefulbox\kern\dimen@ }\kern-\dimen@ }}
 {{\setbox\usefulbox=\hbox{$\m@th\scriptstyle #1$}%
    \dimen@ \getslant\the\scriptfont\symletters \ht\usefulbox
    \divide\dimen@ \tw@ 
    \kern\dimen@ 
    \xoverline{\kern-\dimen@ \box\usefulbox\kern\dimen@ }\kern-\dimen@ }}
 {{\setbox\usefulbox=\hbox{$\m@th\scriptscriptstyle #1$}%
    \dimen@ \getslant\the\scriptscriptfont\symletters \ht\usefulbox
    \divide\dimen@ \tw@ 
    \kern\dimen@ 
    \xoverline{\kern-\dimen@ \box\usefulbox\kern\dimen@ }\kern-\dimen@ }}%
 {}}
\theoremstyle{definition}
\title[Zariski density of rational curves]{On the Zariski density of rational curves on IHS manifolds}
\author{Pietro Beri}
\address{Pietro Beri\\ 
Universit\'e de Lorraine, CNRS, IECL\\
F-54000 Nancy -- France }
\email{pietro.beri@univ-lorraine.fr}
\author{Giovanni Mongardi}
\address{Giovanni Mongardi\\Alma Mater Studiorum, Università di Bologna,  P.zza di porta san Donato, 5, 40126 Bologna, Italia}
\email{giovanni.mongardi2@unibo.it}
\author{Gianluca Pacienza}
\address{Gianluca Pacienza\\ 
Universit\'e de Lorraine, CNRS, IECL\\
F-54000 Nancy -- France }
\email{gianluca.pacienza@univ-lorraine.fr}
\begin{document}

\subjclass[2020]{14H45,14J42}
\keywords{rational curves, irreducible holomorphic symplectic manifolds.}

\begin{abstract}
   In analogy with recent works on $K3$ surfaces, we study the existence of infinitely many ruled divisors on projective irreducible holomorphic symplectic (IHS) manifolds. We prove such an existence result for any projective  IHS manifold of $K3^{[n]}$ or generalized Kummer type which is not a variety defined over $\overline{\Q}$ with Picard number one or maximal. The result is obtained as a combination of the regeneration principle and of a generalization to higher dimension of a controlled degeneration technique, invented by Chen, Gounelas and Liedtke in dimension 2.  
\end{abstract} 
\maketitle
\section{Introduction}
The following conjecture has attracted considerable attention in recent years. 

\begin{conjecture}[cf. \cite{Huy16}, Chapter 13, Conjecture 0.2]\label{conj:K3}
    Every projective K3 surface $(X,H)$ over an algebraic closed field contains infinitely many integral rational curves $C$ linearly equivalent to some multiple of $H$.
\end{conjecture}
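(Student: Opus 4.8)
Since Conjecture~\ref{conj:K3} is not a result proved in the paper, what follows is not a proof but the strategy along which the presently known cases are established --- the one the paper transports to the holomorphic symplectic setting. Write $H^{2}=2g-2>0$ and argue by a dichotomy according to whether $X$ is defined over $\ol{\Q}$. If $X$ is \emph{not} defined over $\ol{\Q}$, the plan is to exploit that $X$ then moves non-trivially in a family of $H$-polarized K3 surfaces: $X$ is a member of the (irreducible) moduli space of primitively $H$-polarized K3 surfaces, whose very general member carries, by a theorem of X.\ Chen on nodal rational curves, an integral rational curve in $|mH|$ for every $m\ge 1$. Spreading $X$ out as a special fibre of a family $\sX\to B$ over a quasi-projective base whose generic fibre is such a very general member, one obtains integral rational curves in the classes $mH$ over the generic point of $B$; the \emph{regeneration principle} (the version stated below, after Chen--Gounelas--Liedtke) then descends, for each $m$, an integral rational curve on $X$ in some positive multiple of $mH$, and the \emph{controlled degeneration principle} guarantees that infinitely many of these have unbounded $H$-degree --- e.g.\ by forcing each relevant flat limit to acquire a rational component of strictly larger degree, or by a Mori-cone argument producing new extremal rational curves as the ambient Picard lattice grows.

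If $X$ \emph{is} defined over $\ol{\Q}$, I would instead pass to finite characteristic. Fix a smooth model of $X$ over the ring of integers of a number field. By the jumping behaviour of the Picard number under reduction (Charles, via the Tate conjecture for K3 surfaces), infinitely many primes $p$ satisfy $\rho(X_{p})>\rho(X)$, and for such $p$ one splits further. If $X_{p}$ has finite height, the extra $(1,1)$-classes lift to a characteristic-zero deformation of $X_{p}$, to which the geometric case applies; a rational curve of bounded $H$-degree on that deformation then specializes to $X_{p}$ by the unobstructedness of rational curves on a K3 surface, and lifts back to $X$. If $X_{p}$ is supersingular it is unirational (Rudakov--Shafarevich, Liedtke), hence dominated by rational curves, which one organizes into infinitely many classes of controlled $H$-degree. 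Assembling the contributions of infinitely many $p$ through a lifting-and-counting argument in the spirit of Bogomolov--Hassett--Tschinkel and Li--Liedtke then produces infinitely many integral rational curves on $X$ lying in multiples of $H$.

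The main obstacle has two faces. First, the regeneration principle is delicate: a flat limit of integral rational curves in class $mH$ need not be irreducible --- it may break up into copies of a single rational curve in $|H|$ --- so one must show that for infinitely many $m$ some limit still contains an integral \emph{rational} component of strictly larger $H$-degree; this is the content of the controlled degeneration principle and needs a careful study of the Mori cone together with a well-chosen induction on the number of components. Second, and more seriously, when $X$ is defined over $\ol{\Q}$ with extreme Picard number --- equal to one, or maximal --- neither horn of the dichotomy works cleanly: deforming $X$ takes it out of $\ol{\Q}$ and one cannot arrange for a new class to become algebraic while staying over $\ol{\Q}$ (the $\rho=1$ trouble), whereas on the other side one loses control of which classes the rational curves produced over finite fields represent (the maximal-$\rho$ trouble). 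These are exactly the cases excluded in the paper's main theorem, and they form the real crux of the conjecture.
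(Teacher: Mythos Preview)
You are right that Conjecture~\ref{conj:K3} is not proved in the paper; there is no proof to compare against, and your text is an informal strategy sketch. A few substantive points nonetheless.

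First, your use of the terms is inverted. In this paper (and in \cite{Reg}) the \emph{regeneration principle} deforms rational curves from a \emph{special} fibre to the \emph{general} one, while the \emph{controlled degeneration principle} (Theorem~\ref{thm:degprin}) goes the other way, from very general fibres to a special one. In your ``not over $\ol{\Q}$'' paragraph you produce curves on the very general fibre and want to push them to the special fibre $X$: that is degeneration, not regeneration. This is more than a naming issue, because the two principles have different hypotheses and conclusions.

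Second, the route you sketch is not the one the paper adapts to IHS manifolds. For $X$ not defined over $\ol{\Q}$, the paper does \emph{not} specialise from a very general fibre. Instead (proof of Theorem~\ref{thm:main}, item~(2)) it takes the Zariski closure $B$ of the moduli point $(X,H)$ over $\ol{\Q}$, finds closed points $b\in B$ with $\rho(\sX_b)>\rho(X)$, applies item~(1) there to get infinitely many ruled divisors, and then \emph{regenerates} these to the generic point, which is $X$. The controlled degeneration principle enters only in the proof of item~(1), to pass from very general $\Lambda$-polarised points (where one has Hilbert schemes or generalized Kummers) to an arbitrary $X$ with $\Pic(X)=\Lambda$.

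Third, your positive-characteristic paragraph is in the spirit of Bogomolov--Hassett--Tschinkel and Li--Liedtke, but several steps are garbled: extra classes on a finite-height reduction do not in general lift to a characteristic-zero deformation of $X$ itself, and the supersingular/unirational case does not by itself organise curves into multiples of $H$. More importantly, none of this machinery is used or developed in the present paper, which works entirely over $\C$.

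Your final paragraph correctly identifies that the outstanding case---for the IHS analogue---is $X$ defined over $\ol{\Q}$ with $\rho(X)=1$; this matches Theorem~\ref{thm:main} and Theorem~\ref{thm:weak_to_strong}. Note however that maximal Picard rank is, at least in the $K3^{[n]}$ case, handled by Remark~\ref{rem_finite_bir} via the infinitude of $\Bir(X)$, so it is not quite on the same footing.
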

We refer the reader to \cite[Chapter 13, Section 0.3]{Huy16} for a discussion of the context, provided by (non-)hyperbolicity type questions and conjectures on rational points on $K3$ surfaces, in which this conjecture has emerged.  Conjecture \ref{conj:K3} has been proven when the characteristic of the field is not 2 or 3 and the K3 surface is very general by Chen, Gounelas and Liedtke in \cite[Theorem A]{CGL} as a combination of two new techniques that they introduced for the study of rational curves on $K3$ surfaces. We refer the reader to  \cite{BHT, CGL, CGL22b, Huy16} for a  list of previous significant contributions in the direction of the conjecture.  

In view of the strong analogies between K3 surfaces and IHS manifolds, it is natural and tempting to make the following analogous prediction (we limit ourselves to the field of complex numbers):
\begin{conjecture}\label{conj:strong}
    Every projective IHS manifold $(X,H)$ contains infinitely many integral rational curves $C$, each of which rules a divisor linearly equivalent to some multiple of $H$.
\end{conjecture}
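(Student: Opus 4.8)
Since the statement is a conjecture that remains open in general, the ``proof proposal'' below is a strategy rather than a proof: it is the plan underlying the main theorem of this paper, which establishes the conjecture for the manifolds in its scope, together with an indication of where the remaining obstruction sits.

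\textbf{Reduction and regeneration.} First I would observe that it suffices to produce, for a given projective IHS manifold $(X,H)$, a single integral rational curve $C$ ruling a divisor $D$ with $D\sim mH$ for some $m\ge 1$: applying this to each polarization $kH$ (equivalently, letting the multiple grow) yields ruled divisors of unbounded degree, hence infinitely many, whose ruling curves have Zariski-dense union. Working in the moduli of $H$-polarized IHS manifolds — or on a Noether--Lefschetz-type sublocus where a chosen primitive sublattice of $\NS(X)$ containing $[H]$ stays algebraic — the locus of members carrying such a ruled divisor is constructible (properness of relative Chow/Hilbert schemes), so the task is to show it meets a very general point. For this I would use the higher-dimensional generalization of the regeneration principle of \cite{CGL}: the generic ruling curve of a uniruled divisor on the central fibre of a one-parameter degeneration of IHS manifolds propagates, with controlled homology class, to the general fibre. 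The delicate point is to set up regeneration so that the ruling curves are not absorbed into a locus where their class fails to be a multiple of the polarization — the analogue of a rational curve on a $K3$ surface deforming only to reducible or non-reduced configurations.

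\textbf{Controlled degeneration.} This is the heart, and the higher-dimensional analogue of the degeneration technique of \cite{CGL}. For $X$ of $K3^{[n]}$ type I would degenerate $(X,H)$, inside the polarized moduli space, to an IHS manifold $X_0$ birational to a moduli space of stable sheaves $M_v(S,\theta)$ on a projective $K3$ surface $S$ of large Picard rank. On $S$, the theorem of Chen--Gounelas--Liedtke \cite{CGL} provides infinitely many integral rational curves, and each rational curve $C\subset S$ gives rise to rational curves inside $M_v(S,\theta)$ — in the basic case $M_v=S^{[n]}$ these are the curves $t\mapsto\{p_1,\dots,p_{n-1}\}\cup\{\gamma(t)\}$ — whose deformations sweep out a uniruled divisor with class controlled by $\Pic(S)$. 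The ``controlled'' part is to choose $v$, $\theta$ and the degeneration so that these $\P^1$'s persist under the specialization and one of the resulting divisor classes is a multiple of the limit polarization $H_0$; the regeneration step then transports that ruled divisor back to $X$. For $X$ of generalized Kummer type the same scheme is run over the associated abelian surface, using the known uniruled divisors on generalized Kummer manifolds (an abelian surface itself carries no rational curve, so the naive point-moving construction must be replaced).

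\textbf{Main obstacle.} The success of the last step hinges on being able to degenerate $(X,H)$ to a manifold of sheaf-moduli shape over a $K3$ or abelian surface while keeping a polarization algebraic and compatible with a uniruled divisor class. Exactly this fails when $X$ is defined over $\overline{\Q}$ with Picard number one — there is essentially no room to deform $X$ keeping $H$ algebraic without leaving the arithmetically rigid locus — and when the Picard number is maximal — $X$ is already extremal and admits no further degeneration of the required type. These are the IHS counterparts of the hardest instance of Conjecture \ref{conj:K3}, the very general $K3$ surface of Picard rank one, so I expect that lifting these restrictions will require a genuinely new input; this is why they are excluded from the main theorem.
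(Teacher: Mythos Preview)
Since the statement is a conjecture, the paper offers no proof of it; I compare your strategy with the paper's route to its partial result (Theorem~\ref{thm:main}).

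Two substantive points. First, your opening reduction is not valid: producing \emph{one} integral ruled divisor $D\sim mH$ on $(X,H)$ does not yield infinitely many by ``applying this to each polarization $kH$'', because $kH$ is proportional to $H$ and you would just recover the same $D$ (or a multiple of it). In the paper, infinitude is obtained by starting from a special point $X_0$ (a Hilbert scheme or generalized Kummer) that already carries \emph{infinitely many} distinct ruled divisors, and then transporting all of them.

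Second, and more seriously, your scheme only reaches the \emph{very general} $X$: you specialise $X\rightsquigarrow X_0$, find ruled divisors on $X_0$, and regenerate. Regeneration (in the sense of \cite{Reg}) propagates a ruled divisor from a special fibre to the very general fibre of a family, not to an arbitrary prescribed fibre. If $X$ has, say, $\Pic(X)=\Lambda$ of rank $\ge 2$, it is itself a special point in the $H$-polarized moduli, and nothing in your outline forces the regenerated divisors to specialise back to $X$ while remaining \emph{integral}. This is exactly the role of the paper's controlled degeneration principle (Theorem~\ref{thm:degprin}), the higher-dimensional analogue of \cite[Theorem~C]{CGL}: once the very general $\Lambda$-polarized member has a primitive integral ruling, so does every $\Lambda$-polarized $X$. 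What you labelled ``controlled degeneration'' is really the search for a good $X_0$; the paper uses density of Hilbert schemes/Kummers in $\Lambda$-polarized moduli (Lemma~\ref{lem:densità}) for that, then regeneration to the very general point, and only \emph{then} controlled degeneration to reach the given $X$. Your proposal is missing this last step, so as written it reproves only what was already in \cite{Reg} for very general polarized IHS.

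Finally, your obstacle analysis is off in the details. The Picard rank one case is handled in the paper whenever $X$ is \emph{not} defined over $\overline{\Q}$ (item~(2) of Theorem~\ref{thm:main}), by passing to the closure of $[(X,H)]$ in the moduli space over $\overline{\Q}$ and using points of higher Picard rank there; and the maximal Picard rank case is (for $K3^{[n]}$ type) covered via infiniteness of $\Bir(X)$, cf.\ Remark~\ref{rem_finite_bir}. The genuine remaining obstruction is Picard rank one over $\overline{\Q}$, as in Theorem~\ref{thm:weak_to_strong}.
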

To our knowledge, the above conjecture is not present in the literature, although expected by experts in the field. %It would be of course tempting to extend the conjecture to IHS varieties over any algebraic closed fields, however for fields differing from $\C$ there is no proof even for the existence of a \emph{single} rational curve for deformations of the ponctual Hilbert scheme of a $K3$ surface.

Partial evidence for this conjecture is provided by \cite{Reg}, where it is proven for \emph{very general} projective IHS manifolds of $K3^{[n]}$ and generalized Kummer type. However, the very general set where the conjecture is proven in loc. cit. is not explicit, therefore it is difficult to apply the result in practice for a given IHS manifold. 
In this paper, we 
concentrate on the following slightly weaker statement,   with the hope of obtaining more precise results.
\begin{conjecture}\label{conj:weak}
    Every projective IHS manifold $X$ contains infinitely many integral rational curves $C$, whose degrees with respect to a polarization are unbounded and each of which rules an ample divisor.
\end{conjecture}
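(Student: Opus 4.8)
The plan is to prove Conjecture \ref{conj:weak} for projective IHS manifolds $X$ of $K3^{[n]}$ or generalized Kummer type that are \emph{not} defined over $\overline{\Q}$ with Picard number one or maximal — the general case appears to lie beyond the present method — by combining two principles. The first is a \emph{regeneration principle} (the main output of \cite{Reg}), valid in every dimension: in a family $\pi\colon\sX\to B$ of projective IHS manifolds, an integral rational curve $R_0$ on a member $X_0$ which rules a divisor $D_0$ and satisfies a mild regenerability hypothesis persists to the less special members of the family, yielding a rational curve ruling a divisor whose class is a bounded multiple of the Gauss--Manin transport of $[D_0]$; working with multiples of $[R_0]$ then produces infinitely many such curves of unbounded degree. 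The second is a \emph{controlled degeneration principle}, the higher-dimensional analogue of the device of Chen--Gounelas--Liedtke \cite{CGL}: it degenerates the polarized manifold $(X,H)$, inside the moduli of polarized IHS manifolds, to an explicit special model on which an \emph{ample} uniruled divisor swept by regenerable rational curves is visible. Granting both, the theorem follows: degenerate $(X,H)$, read off the ample ruled divisor on the special fibre, and regenerate it back onto $X$; the resulting class stays ample (it is a bounded perturbation of a multiple of an ample class) and of unbounded degree.

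Concretely, I would first build the degeneration. Fix an ample $H$ on $X$, of numerical type $\tau$. By the surjectivity of the period map and the global Torelli theorem for $K3^{[n]}$ and generalized Kummer type, the point $[X,H]$ lies in the moduli space $\sM_\tau$ of $\tau$-polarized IHS manifolds of the given deformation type, and the hypothesis that $X$ is not defined over $\overline{\Q}$ with Picard number one or maximal guarantees that $[X,H]$ is not an isolated arithmetically rigid point: it can be joined, through a connected chain of algebraic families of $\tau$-polarized IHS manifolds, to a polarized manifold $(X_0,H_0)$ whose Néron--Severi lattice is as large as we wish, and in particular to one of the form $S^{[n]}$ (or a smooth moduli space of Gieseker-stable sheaves on a K3 surface $S$), resp.\ a generalized Kummer variety $K_n(A_0)$, with $\NS(S)$, resp.\ $\NS(A_0)$, of rank $\ge 2$ prescribed. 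Along each link of the chain we keep a relatively ample class restricting to the polarizations at the endpoints, so that applying the regeneration principle link by link reduces the problem to producing the required curves on $(X_0,H_0)$ and transporting them forward.

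On $X_0=S^{[n]}$ one then exhibits an integral, ample, uniruled divisor. Take $S$ with $\rho(S)\ge 2$ (e.g.\ carrying an elliptic pencil), so that by \cite{CGL} it has integral rational curves $C$ of unbounded degree; combining the incidence divisor $\{\xi : \supp\xi\cap C\ne\emptyset\}$ with the isotropic/Lagrangian geometry that $S^{[n]}$ inherits from $S$, one writes down an integral uniruled divisor $D_0\subset X_0$ whose class lies deep in the ample cone and which is swept by an explicit family of rational curves $R_0$ (for generalized Kummer type one argues similarly, using the rational curves of the Kummer surface $\mathrm{Km}(A_0)$, the inclusion $K_n(A_0)\subset A_0^{[n+1]}$ and the addition morphism). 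One verifies the regenerability hypothesis for $R_0$ — the family of the $R_0$'s has the expected dimension and is unobstructed along $D_0$ — from this explicit description. The regeneration principle then delivers, on $X$, an integral rational curve $R'$ ruling a divisor $D'$ with $[D'] = k\cdot(\text{transport of }[D_0]) + (\text{uniformly bounded error})$; since the transported class is ample and the error is bounded, $D'$ is ample, and letting $\deg C$ (hence $k$ and $\deg D_0$) grow yields infinitely many such $R'$ of unbounded degree.

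The two genuinely hard points are the two principles themselves. For the controlled degeneration, the subtlety is to keep the degeneration \emph{polarized}: the relative class must extend as an honestly ample — not merely nef and big — divisor, which forces one to control how the ample cone moves across the Noether--Lefschetz walls; this is exactly where the excluded cases ($X$ over $\overline{\Q}$ with Picard number one or maximal) obstruct, since for them no suitable non-isotrivial algebraic degeneration to a decomposable model is available. For the regeneration, the curves $R_0$ sweeping $D_0$ are \emph{not} free in $X_0$ — they move only inside $D_0$ — so one needs the ruled-divisor refinement of the regeneration principle together with a deformation-theoretic analysis in the relative Hilbert scheme controlling, \emph{uniformly in} $\deg C$, both the multiplicity $k$ and the error term in $[D']$; this uniformity is precisely what forces all but finitely many of the produced divisors to be ample, and is where I expect the bulk of the work to concentrate.
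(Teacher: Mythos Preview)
Your proposal misidentifies the controlled degeneration principle. In the paper (Theorem \ref{thm:degprin}, generalizing \cite[Theorem C]{CGL}), ``controlled degeneration'' is not the act of degenerating $(X,H)$ to a decomposable model $X_0$; it is the statement that if the \emph{very general} member of a $\Lambda$-lattice-polarized family carries a divisor ruled by an integral primitive rational curve, then so does every \emph{specific} member, in particular $X$. The ``control'' refers to preserving the integrality of the ruling curve as one specializes from very general down to $X$, and the proof hinges on a stable-maps argument (\cite[Proposition 7.4]{CGL}) together with Voisin's rigidity of ruled subvarieties in IHS manifolds \cite{V16}.

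This misidentification produces a genuine gap. The regeneration principle (\cite[Theorem 1.4]{Reg}) outputs ruled divisors only on the \emph{very general} fibre of a family, not on an arbitrarily prescribed fibre; your ``regenerate it back onto $X$'' step therefore fails whenever $X$ is not itself the generic point. For $X$ defined over $\overline{\Q}$ with Picard number at least two and below maximal --- the entire content of item (1) of Theorem \ref{thm:main} --- $X$ is a closed point of $\sM_\Lambda$, and no ``link by link'' chain of regenerations can land on it. The paper's route is different: locate $X_0\cong S^{[n]}$ (or $K_n(A)$) in the same component of $\sM_\Lambda$ via Lemma \ref{lem:densità}, regenerate from $X_0$ to the very general member of $\sM_\Lambda$, and \emph{then} invoke Theorem \ref{thm:degprin} to specialize from very general back to $X$. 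Your final paragraph worries about keeping the polarization ample across Noether--Lefschetz walls and about uniform bounds on a regeneration error term; neither is the obstacle. The real difficulty, which your outline never touches, is proving that the flat limit of the integral ruling curve remains integral at $X$ --- and this is precisely the content of Section \ref{s:deg}.
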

The weaker conjecture has been proven in \cite{Reg} for all IHS manifolds $X$ of $K3^{[n]}$ or generalized Kummer type such that $\Bir(X)$ is infinite, a condition that can be easier to check in practice.
The main result of this paper is the following:
\begin{theorem}\label{thm:main}
    Let $\mathcal M$ be any irreducible component of a moduli space of projective IHS manifolds of $K3^{[n]}$ or generalized Kummer type. 
     Then the following holds:
     \begin{itemize}
\item[(1)] All manifolds in $\mathcal{M}$ with Picard number at least two and strictly smaller than the maximal rank contain infinitely many distinct ample ruled divisors. 
\item[(2)] If $[X]\in \mathcal M$ corresponds an IHS manifold which is not defined over $\overline{\Q}$, then X contains infinitely many distinct ample ruled divisors.
\end{itemize}

\end{theorem}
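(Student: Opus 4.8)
\emph{Overall strategy.} Both parts will be reduced to the following: for every $N$, the manifold $X$ under consideration carries an integral rational curve sweeping out an ample prime divisor of degree $\ge N$ with respect to a fixed polarization; since divisors of distinct degrees are distinct, this produces infinitely many distinct ample ruled divisors. To obtain such curves we combine the two engines of the paper. Working with a ``structured'' member $X_0$ of a family of IHS manifolds --- one admitting, possibly after a birational modification, a Lagrangian fibration, a divisorial contraction, or a $K3^{[n]}$/Kummer presentation, so that its geometry already supplies rational curves --- the controlled degeneration principle produces, on a controlled member of a one-parameter family through $X_0$, a connected nodal genus-$0$ cycle of arbitrarily large degree lying in a prescribed divisor class and ruling an ample divisor. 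The regeneration principle is then used to deform such a cycle to an honest integral rational curve on the other members of the family, still ruling an ample divisor, provided those members keep the prescribed class of type $(1,1)$; the controlled degeneration is arranged so that its output meets the positivity and genericity hypotheses under which the regeneration principle applies. Consequently the whole argument is organised inside a Noether--Lefschetz locus $\mathcal N\subseteq\mathcal M$ whose generic Picard lattice contains all the classes in play, so that these remain algebraic throughout $\mathcal N$ and the regenerated curve reaches the target manifold $X$.

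\emph{Part (1).} Fix $[X]\in\mathcal M$ with $2\le\rho(X)<\rho_{\max}$, where $\rho_{\max}=b_2(X)-2$ is the maximal possible Picard rank. If $\Bir(X)$ is infinite, \cite{Reg} already gives the conclusion, so we assume $\Bir(X)$ finite. Put $\Lambda=\Pic(X)$ and let $\mathcal N$ be the irreducible component through $[X]$ of the locus in $\mathcal M$ of manifolds whose Picard lattice contains $\Lambda$. By the standard dimension count for Noether--Lefschetz loci, $\dim\mathcal N=\rho_{\max}-\rho(X)\ge 1$, so $X$ moves in a positive-dimensional family with generic Picard lattice exactly $\Lambda$, and every class of $\Lambda$ is of type $(1,1)$ at each point of $\mathcal N$. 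Since $\rho(X)<\rho_{\max}$ there is room to enlarge $\Lambda$ by an isotropic class, so $\mathcal N$ contains a member $X_0$ carrying such a class, hence admitting, after a birational modification, a Lagrangian fibration; and $X_0$ can be joined to $X$ by a one-parameter family inside $\mathcal N$. Running the controlled degeneration from $X_0$ in classes of $\Lambda$ of unbounded degree whose ruled divisors are ample, and regenerating along this family --- using $\rho(X)\ge 2$ to ensure the relevant curve and divisor classes are positive enough for the regenerated curve to survive to $X$ and for its ruled divisor to remain ample --- yields on $X$ the desired integral rational curves.

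\emph{Part (2).} Let $[X]\in\mathcal M$ not be defined over $\overline{\Q}$. If $\rho(X)=\rho_{\max}$ then $\mathcal N_{\Pic(X)}$ has dimension $\rho_{\max}-\rho(X)=0$, hence is a finite set of $\overline{\Q}$-points, so $X$ is defined over $\overline{\Q}$, a contradiction; if $2\le\rho(X)<\rho_{\max}$ we are done by Part (1). So assume $\rho(X)=1$, and let $W\subseteq\mathcal M$ be the smallest closed subvariety defined over $\overline{\Q}$ containing $[X]$. Since $[X]\notin\mathcal M(\overline{\Q})$, $W$ is irreducible of positive dimension; $[X]$ avoids every proper closed $\overline{\Q}$-subvariety of $W$, hence is very general in $W$ in the sense relevant below (the exceptional loci of the regeneration argument are themselves defined over $\overline{\Q}$), and, since every Noether--Lefschetz locus is a $\overline{\Q}$-subvariety, the generic Picard lattice of $W$ equals $\Pic(X)=\Z H$. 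By a density argument for Noether--Lefschetz loci, $W$ meets infinitely many Noether--Lefschetz divisors of $\mathcal M$, including some parametrising manifolds that are (after a birational modification) Lagrangian fibred; let $X_0\in W$ be such a manifold. One then runs the controlled degeneration inside $W$ from $X_0$, producing genus-$0$ cycles of unbounded degree in classes proportional to $H$ --- automatically ample on $X$ because $\rho(X)=1$ --- ruling ample divisors, and regenerates these to the nearby members of $W$. Because $[X]$ is a \emph{very general} member of $W$, the regenerated integral rational curves exist on $X$, which finishes the proof.

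\emph{Main obstacle.} The delicate point is the regeneration step: one must guarantee that the genus-$0$ cycle produced by the controlled degeneration deforms to an \emph{integral} rational curve on the \emph{prescribed} manifold $X$ --- not merely on a very general member of the ambient family --- while the swept-out divisor remains \emph{ample} and of \emph{unbounded} degree. This is why the controlled degeneration principle must deliver cycles with built-in positivity (an ample ruled divisor, controlled normal bundles of the components, nodal gluings) and the regeneration principle must be stated under hypotheses preserved by that output. It is precisely here that the hypotheses of the theorem are spent: $\rho(X)<\rho_{\max}$ to have room to degenerate inside a Noether--Lefschetz locus, $\rho(X)\ge 2$ in Part (1) to provide the needed positivity of the relevant classes, and ``not defined over $\overline{\Q}$'' in Part (2) to place $X$ at a very general point of a small $\overline{\Q}$-family, so that it suffices for the regeneration to reach a general member. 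A secondary point --- passing from ``rational curves of unbounded degree ruling ample divisors'' to ``infinitely many distinct ample ruled divisors'' --- follows from the unboundedness of degrees together with boundedness of the family of divisors of bounded degree.
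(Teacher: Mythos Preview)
Your sketch reverses the roles of the two ``engines'' and, as a result, never actually lands on the specific manifold $X$.

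The controlled degeneration principle (Theorem~\ref{thm:degprin}) does not \emph{produce} genus-$0$ cycles starting from a structured special point $X_0$; it is a \emph{specialization} statement. Its input is: for the \emph{very general} member of a $\Lambda$-polarized family (with $\rk\Lambda\ge 2$) there is a ruled divisor in $|mL_t|$ swept by an integral primitive rational curve. Its output is: the same holds for the given $X$. Conversely, the regeneration principle of \cite{Reg} takes curves on a \emph{special} fibre $X_0$ and spreads them to the \emph{very general} fibre, not to a prescribed closed point. So the correct pipeline for Part~(1) is
\[
\text{structured }X_0\ \xrightarrow{\ \text{regeneration}\ }\ \text{very general }X_t\ \xrightarrow{\ \text{controlled degeneration}\ }\ X,
\]
which is exactly opposite to what you write. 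In your order, regeneration only yields curves on the very general member of the family, and you give no mechanism for reaching the specific $X$; ``using $\rho(X)\ge 2$ to ensure the regenerated curve survives to $X$'' is not something the regeneration principle provides.

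A second gap is your choice of structured $X_0$. A Lagrangian fibration gives rational curves in fibres, but the divisor they sweep has class proportional to the (isotropic) fibre class, hence is \emph{not} ample; you would not obtain ample ruled divisors this way. The paper instead uses Lemma~\ref{lem:densità} to find, inside the $\Lambda$-polarized moduli space, a point $X_0$ birational to $S^{[n]}$ (resp.\ $K_n(A)$), where infinitely many ample ruled divisors are known from rational curves on $S$ (resp.\ from \cite{Reg}). Those are regenerated to the very general $\Lambda$-polarized member, and only then is Theorem~\ref{thm:degprin} invoked to descend to $X$.

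For Part~(2), the paper does not use controlled degeneration at all (indeed Theorem~\ref{thm:degprin} needs $\rk\Lambda\ge 2$, which fails when $\rho(X)=1$). Instead one takes the Zariski closure $B$ of the non-closed point $[(X,H)]$ in the moduli space viewed over $\overline{\Q}$, so that $X\times_{\C}k(\eta)\cong X_\eta$. A density argument gives points $b\in B$ with $\rho(X_b)\ge 2$; Part~(1) (or \cite{Reg} when $\rho$ is maximal and $\Bir$ is infinite, together with Lemma~\ref{lem:finite_bad_points} to discard the finitely many bad points) supplies infinitely many ample ruled divisors on $X_b$; regeneration spreads them to the generic fibre $X_\eta$; and the key step is a bijection between ruled divisors on $X$ and on $X_\eta$ (using rigidity of ruled divisors in their linear systems), which transfers the conclusion to $X$. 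Your version, running ``controlled degeneration inside $W$ from $X_0$'' and then regenerating to a very general point, misapplies Theorem~\ref{thm:degprin} and leaves unjustified why the regenerated curves reach $X$ rather than merely a very general member of $W$.
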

%Although the two conditions of item $(1)$ and $(3)$ seem to go in opposite directions, as varieties which cannot be defined over $\overline{\Q}$ can be considered somewhat general, and varieties with Picard rank greater than one are special, the combination of the two items cannot be used to obtain the result at all points in $\mathcal M$. 
\begin{remark}\label{rem_finite_bir}
    In a forthcoming paper \cite{Kir25}, Kirschmer proves that there are no reflective lorentzian lattices of rank 21. These are lattices of signature $(1,20)$ whose isometry group is a finite extension of the group generated by reflections. By the proof of the birational Morrison--Kawamata cone conjecture for IHS manifolds \cite[Theorem 6.25]{Mar11}, it follows that a IHS manifold with finite birational automorphism group has Picard lattice which is reflective lorentzian. Therefore, manifolds of $K3^{[n]}$ type with maximal Picard rank have always infinite birational automorphism groups and they have infinitely many distinct big and nef ruled divisors by \cite[Theorem 1.7]{Reg}. 
\end{remark}
Notice however that, as a consequence of \cite[Theorem 1.1]{Maulik12}, we know that there exist projective IHS manifolds defined over $\overline{\Q}$ with Picard number one, cf. Proposition \ref{prop:Qbarra_rho1}. Even more is true in dimension two, as in \cite[Theorem 1.1]{van07} the author proves the Zariski density in moduli of quartics defined over $\Q$ with Picard number one (and infinitely many rational points). 
Yet as an immediate consequence of Theorem \ref{thm:main} we have the following:
\begin{corollary}
\label{thm:mainbis}
    Let $\mathcal M$ be any irreducible component of a moduli space of projective IHS manifolds of $K3^{[n]}$ or generalized Kummer type. 
     Then for any algebraic curve $B\subset \mathcal{M}$, the set of points in $B$ not containing infinitely many distinct ample ruled divisors is at most countable. 
\end{corollary}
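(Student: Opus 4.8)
The plan is to read the statement off from Theorem~\ref{thm:main} together with the elementary fact that only countably many points of $\mathcal{M}$ correspond to manifolds defined over $\overline{\Q}$; no further geometric input is needed.

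First I would record what Theorem~\ref{thm:main} tells us about the exceptional locus
\[
\mathcal{M}_{\mathrm{bad}}\ :=\ \{\, [X]\in\mathcal{M}\ :\ X \text{ does not contain infinitely many distinct ample ruled divisors} \,\}.
\]
By part (2) of Theorem~\ref{thm:main}, every point of $\mathcal{M}_{\mathrm{bad}}$ corresponds to a manifold defined over $\overline{\Q}$; this is the only part that is logically needed. (Part (1) adds that such a point moreover has Picard number one or maximal, which is what makes the resulting bound genuinely sharp, cf. Proposition~\ref{prop:Qbarra_rho1} and \cite[Theorem 1.1]{van07}.) Hence
\[
\mathcal{M}_{\mathrm{bad}}\ \subseteq\ \mathcal{M}_{\overline{\Q}}\ :=\ \{\, [X]\in\mathcal{M}\ :\ X \text{ is defined over } \overline{\Q} \,\}.
\]

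Next I would check that $\mathcal{M}_{\overline{\Q}}$ is countable. A polarized IHS pair $(X,H)$ defined over $\overline{\Q}$, after replacing $H$ by a very ample multiple, embeds $X$ as a closed subscheme of $\P^N_{\overline{\Q}}$ with a fixed Hilbert polynomial $P$, hence determines a $\overline{\Q}$-point of the Hilbert scheme $\mathrm{Hilb}^P(\P^N_{\overline{\Q}})$. Since $\overline{\Q}$ is countable and this Hilbert scheme is of finite type, there are at most countably many such subschemes, so at most countably many pairs $(X,H)$ over $\overline{\Q}$ up to isomorphism, and therefore at most countably many points of $\mathcal{M}$ of the form $[(X,H)\otimes_{\overline{\Q}}\C]$; by definition these exhaust $\mathcal{M}_{\overline{\Q}}$. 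Combining with the previous step, $\mathcal{M}_{\mathrm{bad}}$ itself is countable, so a fortiori its intersection with any algebraic curve $B\subset\mathcal{M}$ is at most countable, which is the assertion.

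Once Theorem~\ref{thm:main} is available the argument is essentially formal, so there is no real obstacle; the only point calling for a little care is the bookkeeping with fields of definition. One should note that a polarization on an IHS manifold defined over $\overline{\Q}$ can again be chosen over $\overline{\Q}$ — this holds because the Picard group of an IHS manifold is unchanged under the extension $\overline{\Q}\subset\C$, the relative Picard scheme being \'etale since $h^1(\mathcal{O})=0$ — and that the property of being defined over $\overline{\Q}$ is insensitive to finite extensions of the base field. Both facts are standard.
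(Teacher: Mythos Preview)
Your argument is correct. In fact, the paper explicitly records this route as an alternative proof: once Theorem~\ref{thm:main}(2) is in place, the bad locus is contained in the set of $\overline{\Q}$-points of $\mathcal{M}$, which is countable for the Hilbert-scheme reasons you give. (The paper phrases this by invoking ``item (i)'', which is a slip; what is used is item (2).)

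For completeness, note that the paper's \emph{primary} argument is slightly different and avoids the field-of-definition bookkeeping entirely: given the curve $B$, one first uses the density of the Noether--Lefschetz locus to find a point $0\in B$ with $\rho(X_0)\geq 2$, then applies Theorem~\ref{thm:main}(1) to $X_0$ and regenerates via Theorem~\ref{thm:rmk}, so that the very general fibre over $B$ (hence all but countably many fibres, since $B$ is a curve) carries infinitely many distinct ruled divisors. Your approach is shorter once Theorem~\ref{thm:main}(2) is available; the paper's approach has the mild advantage of relying only on item (1) plus the regeneration principle, without passing through the more delicate proof of item (2).
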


Actually, Conjecture \ref{conj:weak} for all varieties over $\overline{\Q}$ should be considered the hardest case, as, using \cite[Regeneration principle 1.3]{Reg}  we can show the following (see Theorem \ref{thm:weak_to_strong_technical} for a more general statement):
\begin{theorem}\label{thm:weak_to_strong}
    Suppose that Conjecture \ref{conj:weak} holds for all projective IHS varieties of $K3^{[n]}$ or of generalized Kummer type defined over $\overline{\Q}$. Then Conjecture \ref{conj:weak} holds for all manifolds in the same deformation class.
\end{theorem}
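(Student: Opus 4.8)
The quickest route splits the deformation class by field of definition. If $X$ is defined over $\overline{\Q}$, say $X\cong Y_{\C}$ with $Y$ a projective IHS manifold over $\overline{\Q}$ of $K3^{[n]}$ or generalized Kummer type, then the hypothesis applies to $Y$, and since ampleness, uniruledness of a subvariety, the number of distinct divisor classes and degrees with respect to a polarization are insensitive to the extension $\overline{\Q}\subset\C$, base-changing the curves and ruled divisors on $Y$ gives Conjecture \ref{conj:weak} for $X$. If instead $X$ is not defined over $\overline{\Q}$, then after choosing a polarization $[X]$ lies in a component $\mathcal M$ of a moduli space as in Theorem \ref{thm:main}, and part (2) of that theorem furnishes infinitely many distinct ample ruled divisors on $X$; as bounded-degree rational curves in a fixed class sweep out a fixed divisor, infinitely many distinct ruled divisors force their ruling curves to have unbounded degree, which is Conjecture \ref{conj:weak}. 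This argument, however, uses the full force of Theorem \ref{thm:main}, whose proof mixes regeneration with the controlled degeneration technique; the point of the more general Theorem \ref{thm:weak_to_strong_technical} is to avoid the latter, and I would prove it directly as follows.

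Assume $X$ is not defined over $\overline{\Q}$ and set $L:=\NS(X)$, a lattice of signature $(1,\rho-1)$ with $\rho\ge 1$. Inside the moduli space of IHS manifolds of the given type, the locus of those whose Picard group receives a parallel transport of $L$ is a closed algebraic subvariety defined over $\overline{\Q}$ (a Noether--Lefschetz locus, hence defined over $\overline{\Q}$ since divisor classes are absolutely Hodge). Let $Z$ be the $\overline{\Q}$-Zariski closure of $[X]$ in it; then $Z$ is an irreducible $\overline{\Q}$-variety, $X$ is a geometric generic fibre of the tautological family $\mathcal X$ over $Z$, the Picard lattice of that generic fibre is exactly $L$, and $\dim Z\ge 1$ since $[X]$ is not a $\overline{\Q}$-point. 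Fix a polarization $h\in L$ of $X$; it stays ample on a dense open of $Z$ containing the generic point.

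The crucial step is to produce a closed point $t\in Z(\overline{\Q})$ whose fibre $\mathcal X_t$ is a projective IHS manifold over $\overline{\Q}$ with $\NS(\mathcal X_t)=L$, with $h$ still ample, and lying in the same chamber of the positive cone as $X$, so that ampleness of classes of $L$ agrees on $\mathcal X_t$ and on $X$. Plain density of $Z(\overline{\Q})$ is not enough: the further jumping locus $\{z\in Z:\Pic(\mathcal X_z)\supsetneq L\}$, though a proper subset, is only a \emph{countable} union of proper $\overline{\Q}$-subvarieties and could a priori contain all $\overline{\Q}$-points. Such a $t$ should instead come from a Maulik-type construction of a projective IHS manifold over $\overline{\Q}$ with prescribed Picard lattice $L$ — the rank-one case being Proposition \ref{prop:Qbarra_rho1} — placed in the component of the Noether--Lefschetz locus through $[X]$ and in the ample chamber of $X$. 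I expect this arithmetic input to be the main obstacle.

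Granting $t$, the hypothesis provides infinitely many integral rational curves $C_i\subset\mathcal X_t$ of unbounded $h$-degree, each ruling an ample divisor $D_i$; since $\NS(\mathcal X_t)=L$, each $[D_i]$ lies in $L$ and, by the choice of $t$, is ample on $X$ too. Joining $t$ to the generic point of $Z$ by a curve $B$ and restricting $\mathcal X$ to $B$, the class $[D_i]$ becomes monodromy-invariant, hence represented by a line bundle on the total space that is ample near the generic fibre. Applying \cite[Regeneration principle 1.3]{Reg} to this one-parameter family and to the rational curve $C_i$ ruling the ample divisor $D_i$ on the special member $\mathcal X_t$ regenerates, on the generic member $X$, a rational curve ruling an ample divisor in the class $[D_i]$, of $h$-degree equal to that of $C_i$. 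Letting $i$ vary, the $[D_i]$ are infinitely many classes of unbounded degree, so $X$ carries infinitely many integral rational curves of unbounded degree each ruling an ample divisor, i.e. Conjecture \ref{conj:weak} for $X$; a harmless finite base change on $B$ may be needed to invoke the Regeneration principle.
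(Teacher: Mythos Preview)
Your first route is correct: for $X$ defined over $\overline{\Q}$ the hypothesis applies directly, and for $X$ not defined over $\overline{\Q}$ Theorem~\ref{thm:main}(2) supplies infinitely many distinct ample ruled divisors; your passage to unbounded degree is sound, since the dual curve classes of ample divisors lie in the positive cone, where a bound on the $H$-degree leaves only finitely many lattice points, each giving rise to finitely many ruled divisors. This is, however, not the paper's argument, and as you observe it imports the controlled-degeneration machinery through Theorem~\ref{thm:main}; incidentally it does not use the hypothesis at all in the non-$\overline{\Q}$ case, since Theorem~\ref{thm:main}(2) is unconditional.

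The paper instead deduces the statement from Theorem~\ref{thm:weak_to_strong_technical}, whose proof bypasses exactly the difficulty you flag in your second route. You seek a $\overline{\Q}$-point $t$ with $\NS(\mathcal X_t)=L$ lying in the correct ample chamber---an arithmetic input you rightly call the main obstacle and do not supply. The paper needs no such point. It spreads $(X,H)$ out over an affine $\overline{\Q}$-scheme $B$, picks an \emph{arbitrary} $b\in B(\overline{\Q})$, and applies the hypothesis on $\mathcal X_b$ to obtain curves $C_i$ ruling ample $D_i$, with no control on $[D_i]$ beyond ampleness. The key move is that one does \emph{not} try to transport $[D_i]$ to the generic fibre. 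Instead, for each $i$ one chooses $n_i$ so that $n_ih_b-[C_i]$ is ample, primitive and of square exceeding the constant in Hypothesis~\ref{hyp:curve che rigano}; that hypothesis then furnishes a ruling representative of this residual class, and joining it to $C_i$ produces a connected rational cycle of class $n_ih_b$. Since $h$, the dual of the relative polarization, extends over $B$, \cite[Regeneration Principle~1.3]{Reg} yields on the geometric generic fibre $X$ a divisor ruled by a curve of class $n_ih$. Distinctness of the $D_i$ forces countably many distinct regenerations and unbounded $n_i$. Thus any Picard jump at $b$ is absorbed by trading the original class for a multiple of the polarization, and no Maulik-type input is required. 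Note also that in your last paragraph you invoke \cite[Regeneration Principle~1.3]{Reg} as if it preserved $[D_i]$; it does not---its output is always ruled in a multiple of the polarization, which is precisely what makes the paper's argument work without your special point $t$.
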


Theorem \ref{thm:main} will be proven using a controlled degeneration principle, which allows to control the integrality of degenerations of rational curves and may be interesting in its own. We will make use of moduli spaces of lattice-(pseudo)-polarized IHS manifolds for which we refer the reader to \cite{Dol96, BCS_class}.

\begin{theorem}\label{thm:degprin}
(Controlled degeneration principle) Let $X$ be a projective IHS manifold with $\Pic(X)=\Lambda$ for a lattice $\Lambda$ of rank at least two. Let $L\in \Lambda$ be a primitive ample divisor on $X$. Let $\mathcal{M}$ be an irreducible component of the moduli space $\mathcal{M}_\Lambda$ of $\Lambda$-polarized IHS containing $(X,\Lambda)$. Suppose that for a very general point $X_t\in \mathcal{M}$ there is $m\in \mathbb{N}$ and a ruled divisor in $|mL_t|$ ruled by an integral primitive curve. Then the same holds for $X$.
\end{theorem}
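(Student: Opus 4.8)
The plan is to degenerate a very general $\Lambda$-polarized member of $\sM$ to $X$ and carry the ruling curves through the degeneration, the whole point being to keep control of the curve class and of integrality in the limit. Write $\dim X=2n$; one may assume $\dim\sM\ge 1$, the case $\dim\sM=0$ being trivial. First I would invoke the structure theory of uniruled divisors on IHS manifolds (a prime divisor ruled by a $(2n-2)$-dimensional family of rational curves of class $\beta$ has class a positive multiple of the Beauville--Bogomolov dual of $\beta$; cf.\ \cite{Mar11,Reg}): applied to a ruled divisor in $|mL_t|$ this forces its ruling class $\beta$ to be proportional to the dual of $L$, and together with the primitivity hypothesis it pins $\beta$ down to $\pm$ the primitive class on that ray, and pins $m$ down as well — both depending only on $\Lambda$ and $L$. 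Hence the locus $S\subseteq\sM$ of members carrying a ruled divisor in $|mL_t|$ ruled by an integral curve of class $\beta$ contains the very general locus of the hypothesis; being constructible (it is defined by fibre-dimension and generic-integrality conditions on the relative Chow variety of $\sX_\sM/\sM$) and $\C$ being uncountable, $S$ contains a dense open $U\subseteq\sM$. I would then pick a smooth quasi-projective curve $B\subseteq\sM$ through $[X]$ that meets $U$ and avoids the countably many Noether--Lefschetz loci where $\Pic$ jumps (possible by the same cardinality argument), and, after a finite base change, so that the ruled divisor and its ruling family are defined over $\C(B)$; this produces $\pi\colon\sX\to B$ with $\sX_0\cong X$ over a point $0$, with $\sL$ extending $L$, and with generic fibre $\sX_\eta$ of Picard lattice $\Lambda$ carrying a ruled divisor $D_\eta\in|mL_\eta|$ ruled by integral curves of class $\beta$.

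Next I would spread the ruling curves out: over a suitable open $B^\circ\ni\eta$ they form a family $\sV^\circ$ in the relative Kontsevich space $\overline M_{0,0}(\sX_{B^\circ}/B^\circ,\beta)$, flat of relative dimension $2n-2$, with integral general member and with sweep a divisor $\sD^\circ\subseteq\sX_{B^\circ}$ satisfying $\sD^\circ_t\in|mL_t|$. Taking the closure $\sV$ of $\sV^\circ$ in $\overline M_{0,0}(\sX/B,\beta)$, which is proper over $B$, the map $\sV\to B$ is proper and surjective, so $\sV_0$ is nonempty of dimension $\ge 2n-2$. The closed image in $\sX$ of the universal curve over $\sV$ equals $\overline{\sD^\circ}$, an irreducible divisor proper and surjective over $B$ and not contained in a fibre, so $D_0:=\overline{\sD^\circ}\cap X$ is a divisor on $X$; and since $\sO_{\sX}(\overline{\sD^\circ})$ restricts to $\sL^{\otimes m}$ over $B^\circ$ and the ambiguity is absorbed by pullbacks from $B$ and by fibre components over $B\setminus B^\circ$, one gets $D_0\in|mL|$. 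By construction $D_0$ is covered by the rational curves of class $\beta$ parametrized by $\sV_0$, and only finitely many of them pass through a general point.

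It then remains to extract an integral primitive ruling. A general member of $\sV_0$ must be an irreducible rational curve: were its image cycle to involve several non-contracted components $\Gamma_i$, these would sweep out closed sets covering $D_0$, so some $\Gamma_i$ would rule a prime divisor whose ruling class $[\Gamma_i]$ is, by the structure theory, again a positive multiple of the dual of $L$, hence a positive integer multiple of $\beta$ — but then $[\Gamma_i]\cdot L$ would be strictly smaller than $\beta\cdot L$, absurd. As $\beta$ is primitive and remains so in the flat family $\sV$, a general limit stable map is moreover birational onto its image, an integral curve of class $\beta$; these integral $\beta$-curves cover $D_0$ and, being irreducible, each lies in one irreducible component, so some component $D'$ of $D_0$ is a prime uniruled divisor ruled by a $(2n-2)$-dimensional family of integral curves of the primitive class $\beta$. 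By the structure theory once more, $[D']$ is a positive multiple of the dual of $\beta$, hence proportional to $L$ in $\Pic(X)\otimes\mathbb{Q}$; since $L$ is primitive, $[D']=m'L$ for some $m'\in\Z_{>0}$, and $D'$ is a ruled divisor in $|m'L|$ ruled by an integral primitive curve, as required.

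The genuine ``control'' in the statement — and the step I expect to be hardest — is ruling out that the limit family $\sV_0$ degenerates so badly that its general member becomes reducible or acquires a contracted component; the argument above reduces this to the rigidity of the (forced) primitive ruling class $\beta$ and a minimality/bend-and-break argument, but it must be handled with care, because $X$ can a priori carry rational curves of small $L$-degree invisible on $\sX_\eta$, so the structure theory has to be used in the sharp local form that a prime uniruled divisor on $X$ ruled by a maximal family has ruling class proportional to the dual of its own class — in particular for divisors of positive square, which is exactly the case $[D']=m'L$. Verifying that this structure theory applies verbatim to the limiting prime divisor $D'$, and carrying out the relative-Picard bookkeeping needed for $D_0\in|mL|$ and the handling of the finite base change, is where the technical weight of the proof sits.
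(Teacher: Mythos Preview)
Your one-parameter degeneration over a curve $B\subset\sM_\Lambda$ is the natural first idea, but the integrality step has a genuine gap, and closing it is exactly why the paper works over a \emph{two}-dimensional base following \cite{CGL}. The problem is the claim ``some $\Gamma_i$ would rule a prime divisor whose ruling class $[\Gamma_i]$ is, by the structure theory, again a positive multiple of the dual of $L$''. The structure theory only tells you that if $\Gamma_i$ rules a prime divisor $D_i$ then $[D_i]$ is proportional to $[\Gamma_i]^\vee$; it does \emph{not} force $[D_i]$ to be proportional to $L$. Since $\Pic(X)=\Lambda$ has rank $\ge 2$, the limit $D_0\in|mL|$ can be reducible with components whose classes lie off the ray of $L$, and then $[\Gamma_i]\in\Lambda^\vee$ need not be proportional to $\beta$ at all; the inequality $[\Gamma_i]\cdot L<\beta\cdot L$ is no contradiction. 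You in fact flag this yourself (``$X$ can a priori carry rational curves of small $L$-degree invisible on $\sX_\eta$''), but the minimality/bend-and-break you gesture at cannot close the gap: over a one-dimensional base nothing prevents the ruling curve from splitting into pieces whose classes range freely in the rank-$\ge 2$ lattice $\Lambda^\vee$. Your later extraction of $D'$ with $[D']=m'L$ presupposes that the general limit curve is already integral of class $\beta$, so it cannot be used to rescue the earlier step.

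The paper instead places $X$ on a \emph{surface} $U$ in a $\Sigma$-polarized moduli space for a carefully chosen corank-one sublattice $\Sigma\subset\Lambda$ containing $L$, so that the $\Lambda$-Hodge locus is a curve $B\subset U$ through $[X]$ and no nontrivial partial sum of the limiting component classes lies in $\Sigma^\vee$. If the limit over $0$ were reducible, \cite[Proposition~7.4]{CGL} produces an entire curve $F$ (mapping nontrivially to $U$) along which the stable maps remain singular; the two summand classes are then algebraic on every fibre over $\varphi(F)$ and, by the choice of $\Sigma$, do not lie in $\Sigma^\vee$, so $\varphi(F)$ must be contained in the $\Lambda$-Hodge locus $B$ --- contradicting the hypothesis that the ruling curves are smooth over the very general point of $B$. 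The two-parameter base is what makes this work: it propagates the putative reducibility from the single point $0$ out to a curve where a Hodge-theoretic constraint bites; over your one-dimensional $B$ there is no such leverage. A further input you do not touch, needed to feed the higher-dimensional situation into CGL's proposition, is the rigidity $f(\sC_e)=f(\sC_E)$ for all $e\in E$; this is automatic on a K3 but in higher dimension requires \cite[Theorem~1.3(ii)]{V16}.
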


This principle is a generalization to IHS of arbitrary dimension of \cite[Theorem C]{CGL} and it is proven along the same lines.

Notice that the results contained in this paper allow to obtain existence of infinitely many ruled divisors for other classes of irreducible symplectic varieties based upon the following simple observation: if $X \dashrightarrow Y$ is a dominant rational map and $X$ contains a infinitely many ruled divisors, then $Y$ does. In this way we deduce the  existence of infinitely many ruled divisors with positive square with respect to the Beauville-Bogomolov-Fujiki form for some $OG6$ (exactly for those dominated by a $K3^{[3]}$, see \cite{MRS}), as well as for most of the singular symplectic varieties studied in \cite{BGMM}. We refer the reader to Section \ref{sec:dominati} for more details. 

For the basic theory of IHS manifolds we refer the reader to the standard references \cite{Bea83, Huy99}.

{\bf Acknowledgements.} We thank  F. Charles for bringing \cite{Maulik12} to our attention, S. Floccari for pointing out Proposition \ref{prop:dominati}, S. Brandhorst, M. Kirschmer, S. Tayou and K. O'Grady for valuable comments.
\bigskip

{\small G.M. was supported by PRIN2020 research grant ``2020KKWT53'', by PRIN2022 research grant ``2022PEKYBJ'' and is a member
of the INDAM-GNSAGA. P.B. and G.P. were  supported by the project ANR-23-CE40-0026 ``Positivity on K-trivial varieties''.
This work was also supported by the "National Group for Algebraic
and Geometric Structures, and their Applications" (GNSAGA - INDAM), in particular part of the work was carried out while G.P. was visiting professor under the support of GNSAGA.
Funded by the European Union - NextGenerationEU under the National Recovery and Resilience Plan (PNRR) - Mission 4 Education and research - Component 2 From research to business - Investment 1.1 Notice Prin 2022 - DD N. 104 del 2/2/2022, ``Symplectic varieties: their interplay with Fano manifolds and derived categories'', proposal code 2022PEKYBJ – CUP J53D23003840006.}

\section{Controlled degenerations}\label{s:deg}
In this section we will give a proof of Theorem \ref{thm:degprin}.
The main ingredient of the proof  will be the following result.
\begin{proposition}[\cite{CGL}, Proposition 7.4]\label{prop:CGL}
Let
\begin{equation}
    \xymatrix{\sC \ar[d]_{f}\ar[r] & V\ar[d]^{\varphi} & E\ar@{_{(}->}[l]\ar[d]\\
    \sS \ar[r]^{\pi}& U & \{0\}\ar@{_{(}->}[l]}
\end{equation}
be a commutative diagram of quasi-projective varieties over an algebraically closed field, where
\begin{itemize}
\item[-] all morphisms are projective;
\item[-]  $\varphi:V\to U$ is a dominant map between two smooth quasi-projective surfaces $V$ and $U$;
\item[-] $o\in U$ is a point of $U$, $E$ is a connected component of $\varphi^{-1}(0)$ and
\item[-] $f:\sC/V\to \sS$ is a family of stable maps of genus $0$ to $\sS$ over $V$ whose very general fibres $\sC_b$ are smooth for $b\in V$.
\end{itemize}
Suppose there is a closed subscheme $Y$ of $\sC_E:=\sC\times_V E$ satisfying that $Y$ is flat over $E$ of pure dimension $\dim E+1$,
\begin{equation}\label{eq:rig}
    f(Y_s)=f(Y) \textrm{ for all } s\in E,
\end{equation}
and there are a point $e\in E$ and two distinct irreducible components $G_1$ and $G_2$ of $Y_e$ with $f_*G_i\not= 0$ for $i=1,2$. Then there exists an integral curve $F\subset V$ such that $\varphi_* F\not= 0$,  $E\cap F\not=\emptyset$ and $\sC_p$ is singular for all $p\in F$.
\end{proposition}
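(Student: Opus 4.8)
The plan is to analyse the reducible locus $B:=\{p\in V:\sC_p\text{ is a singular curve}\}$ and to produce $F$ as an irreducible component of $B$, the subscheme $Y$ and the components $G_1,G_2$ being used only to exclude one degenerate configuration.

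First I would check that $B$ is a closed subset of $V$ which is either empty or of pure codimension one. Since $\sC/V$ is a family of genus-$0$ stable maps, the fibres of $h\colon\sC\to V$ have only nodes as singularities, and the non-smooth locus $\operatorname{Nodes}(\sC/V)\subset\sC$ is finite over $V$ with image $B$; \'etale-locally near such a node the total space $\sC$ has the form $\{xy=t\}$ with $t$ a function pulled back from $V$, and there $\operatorname{Nodes}(\sC/V)$ is $\{x=y=0,\ t=0\}$. Hence $B$ is locally $\{t=0\}$, so on the smooth surface $V$ it is a divisor (possibly reducible and non-reduced) or is all of $V$; and $B\neq V$ because the very general $\sC_b$ is smooth. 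In particular $B$ has no isolated points, and $e\in B$ since $\sC_e\supset G_1\cup G_2$ is reducible.

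Next I would carry out the main case analysis. If some irreducible component $F$ of $B$ meets $E$ but is not contained in $E$, then $F$ is the curve we want: $\sC_p$ is singular for every $p\in F$ (as $F\subset B$), $F\cap E\neq\emptyset$ by assumption, and $\varphi_*F\neq 0$, for otherwise $F\subset\varphi^{-1}(o)$ and, being irreducible and meeting $E$, $F$ would lie in the connected component of $\varphi^{-1}(o)$ containing a point of $E$, i.e.\ $F\subset E$. So it remains to rule out the opposite situation: that every component of $B$ meeting $E$ lies in $E$, so that $E$ is a union of components of $B$ while $\sC_s$ is smooth for every $s$ in a punctured neighbourhood of $E$ in $V$.

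This final exclusion is the heart of the matter, and I would argue as in \cite[Proposition 7.4]{CGL}: the point is that $Y$ forces the splitting visible on $\sC_e$ to spread out transversally to $E$. Put $Z:=f(Y)$; since $f_*G_i\neq0$ the curves $f(G_1),f(G_2)$ are one–dimensional, so $\dim Z=1$, and by hypothesis $f(Y_s)=Z$ for every $s\in E$. Because the image $Z$ is fixed, $Y$ admits a ``horizontal'' extension over a neighbourhood of $E$: each $G_i$ lies in $f^{-1}(Z)\subset\sC$, and replacing $Y$ by a suitable union of irreducible components of $f^{-1}(Z)$ through $G_1$ and $G_2$ one gets a closed subvariety of $\sC$, defined over an open $V'\ni E$, which restricts over $E$ to a subscheme containing $Y$ and whose central fibre still has $G_1,G_2$ as distinct components. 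The flatness of $Y$ over $E$ together with the constancy of $f(Y_s)$ then prevents both branches $G_1,G_2$ from being absorbed into the fibre direction over $E$: the node of $\sC_e$ separating them must persist along a curve $F\subset B$ which crosses $E$, so $F\not\subset E$, contradicting the opposite situation. The genuinely delicate part — correctly handling contracted components of $Y_e$ and multiple covers of $Z$, and turning ``the node persists'' into an actual curve in $V$ transverse to $E$ — is exactly the content of \cite[Proposition 7.4]{CGL}, to which I would defer for the details, and it is the step I expect to be the main obstacle.
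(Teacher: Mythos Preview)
The paper does not give its own proof of this proposition: it is quoted verbatim from \cite[Proposition~7.4]{CGL} and used as a black box in the proof of the Controlled degeneration principle. So there is no ``paper's proof'' to compare against beyond the citation itself.

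Your outline is structurally sound through the first two steps: the singular locus $B\subset V$ is a divisor on the smooth surface $V$ by the local description of nodes in a family of stable genus-$0$ curves, and if some component of $B$ meets $E$ without being contained in $E$ you are done. That part is fine.

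The gap is in the final exclusion. You need to rule out that every component of $B$ meeting $E$ is contained in $E$, and here your argument becomes circular: you sketch an idea (extend $Y$ horizontally via components of $f^{-1}(Z)$, then claim the node separating $G_1$ and $G_2$ ``must persist along a curve $F\subset B$ which crosses $E$''), but you do not justify that persistence claim, and you explicitly defer the delicate part to \cite[Proposition~7.4]{CGL}---which is the very statement you are proving. The sentence ``the node of $\sC_e$ separating them must persist along a curve $F\subset B$ which crosses $E$'' is precisely what has to be established, and nothing in your sketch forces it: a priori the components of $f^{-1}(Z)$ through $G_1$ and $G_2$ could merge into a single component over points of $V\setminus E$ near $E$, and the flatness of $Y$ over $E$ alone says nothing about behaviour transverse to $E$. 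So as written the proposal is not a proof but an outline that stops exactly where the real content begins.
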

%\begin{theorem}\label{thm:degprin}
%(Controlled degeneration principle) Let $X$ be a projective IHS manifold with $\Pic(X)=\Lambda$ for a lattice $\Lambda$ f rank at least two. Let $L\in \Lambda$ be a primitive ample divisor on $X$. Let $\mathcal{M}$ be an irreducible component of the moduli space $\mathcal{M}_\Lambda$ of $\Lambda$-polarized IHS containing $(X,\Lambda)$. Suppose that for a general point $X_t\in \mathcal{M}$ there is $m\in \mathbb{N}$ and an uniruled divisor in $|mL_t|$ ruled by an integral primitive curve. Then the same holds for $X$.
%\end{theorem}

\begin{proof}[Proof of the Controlled degeneration principle \ref{thm:degprin}]
The main point of the proof is  to  reduce to a sort of relative version of the above proposition \cite[Proposition 7.4]{CGL} by applying it to a very general fibre of a family of curves ruling a divisor. This is possible by the choice of a section, after possibly a base change. We make things precise in what follows.
%the two-dimensional case so to apply Proposition \ref{prop:CGL}, as in  \cite[Theorem 7.1]{CGL}.

 As usual $2n$ will denote the dimension of $X$.
We first deal with the case $\rho(X)=2$, where the idea of the proof is more transparent.
We consider one component of the moduli space $\mathbf M$ of polarized IHS manifolds containing the point $[(X,L)]$. In this moduli space $\mathbf M$ we may take  a general smooth and irreducible affine surface $U$  passing through the point $(X,L)$. Since $\sM_{\Lambda}$ is smooth,  %(\color{red} non \`e liscio ovunque? \color{blue} Sì, hai ragione\color{black})
%\textcolor{red}{(A priori lo spazio di moduli può essere singolare (Gritsenko-Hulek-Sankaran). Credo che CGL usino che lo stack è singolare, non so bene come)}NOTA: qui mi riferivo ai moduli quozientati per la monodromia
the surface $U$ can be taken so that there is precisely one irreducible curve $B\subset U$ containing  $[(X,L)]$ and parametrizing IHS manifolds $\sX_b,\ b\in B$ with $\Lambda\subset \Pic(\sX_b)$. Hence, if $(\sX,\sL)\to U$ is the corresponding family of IHS manifolds, we have $\Pic(\sX_u)=\mathbb Z \sL_u$ for a very general $u\in U$,  $\Pic(\sX_b)=\Lambda$ for a very general $b\in B$, and $\sX_0=X$ and $\sL_0=L$ for $0\in B$.

Furthermore by hypothesis %\footnote{\textcolor{red}{l'ipotesi chiede il divisore unirigato su un elemento in un aperto $\mathcal{U}$ di $\mathcal{M}$, ma nella dimostrazione abbiamo una curva $B$ dentro $\mathcal{M}_\Lambda$ su cui vogliamo usare l'ipotesi. A priori $B$ potrebbe essere nel complementare di $\mathcal{U}$. Probabilmente si può scegliere $\mathcal{U}$ in modo che intersechi $B$, ma vorrei avere un argomento per dirlo. HO LO STESSO APPUNTO SULL'ARGOMENTO DI CGL} Giovanni: Noi sappiamo che l'ipotesi è vera sui punti generali di $\mathbf{M}$, ci basta scegliere U e B in modo che $B$ intersechi questo aperto (cosa che possiamo sempre fare per liscezza di questi spazi di moduli)}
 we can choose $U$ such that for a  very general $b\in B$ and  some $m>0$, the manifold $\sX_b$ contains a ruled divisor $D_b\in |m\sL_b|$ ruled by integral curves with {\it primitive} class.
 Let 
 \begin{equation}\label{eq:alpha}
 \alpha\in H^0(U,R^{4n-2}f_* \Z)
 \end{equation}
 be a global section of Hodge type $(2n-1,2n-1)$ of $R^{4n-2}f_* \Z$ such that $\alpha_b$ is the class of primitive curves ruling $D_b$.  By \cite[Proposition 3.1]{CMP19} these curves deform over $U$ to yield, for all $u\in U$ a ruled divisor $D_u\in |m\sL_u|$.
 The locus in $D_u$ given by points where a positive dimensional family of rational curves passes is a closed subset of smaller dimension for all $u\in U$ by \cite[Theorem 1.3 item (ii)]{V16}. Therefore, up to a base change we may and will assume that for any point of $D_u$ outside of the closed locus above, there exists a unique rational curve $R_u\subset \sX_u$ in the family ruling $D_u$. Furthermore, for $u\in U$ very general, this curve is irreducible. Notice that by hypothesis this holds also on very general points of $B$.

 %Up to a base-change
  %\footnote{\textcolor{red}{qual è la base su cui faccio il cambio? A me pare $U\times_U \mathcal{D}\cong \mathcal{D}$, dove $\mathcal{D}$ è la famiglia dei divisori $D_u$, perché ho una curva per ogni $(u,p)$, $u\in U$ e $p\in D_u$... anche dopo, vogliamo una superficie $\mathcal{S}_u$ contenente $R_u$ per ogni $u$ sulla base} Giovanni: per ogni punto $u$ di $U$ abbiamo scelto un punto sul divisore $D_u$, in modo che questo punto sia generale su $D_0$} 
 %we may and will assume that for a very general $u\in U$ and a very general point of $D_u$ there exists a unique irreducible rational curve $R_u\subset \sX_u$ in the family ruling $D_u$. Without loss of generality, we can suppose that the very general set of $U$ where we are taking a very general point of $D_u$ contains the central point $0$, i.e. we are choosing a very general point of $D_0$ and one of the rational curves passing through it. Notice that we cannot suppose that this curve is irreducible, as we obtain it by a flat limit. 
 We then consider the family $\sR\subset \sX$ of such rational curves, which by construction contains the central point $0$. % which we close to contain the central point $0$. %Now take a relatively ample line bundle $\sA\in \Pic(\sX/U)$. We define a family of surfaces $\sS\to U$ by taking on a general fiber $\sX_u$ the surface $\sS_u\subset \sX_u$  cut by $2n-2$ general members of  $|\sA_u^{\otimes m}\otimes I_{R_u}|$, for $m\gg 0$. In this way we have that this  surface of general type $\sS_u$ contains the curve $R_u$ in the ruling of $D_u$. 
  By eventually applying stable reduction, cf. \cite[Proposition 8.2]{CGL}, we then obtain the following diagram
\begin{equation}\label{eq:diag}
    \xymatrix{\sC \ar[d]_{f}\ar[r] & V\ar[d]^{\varphi} & E\ar@{_{(}->}[l]\ar[d]\\
    \ \ \ \ \ \ \sR \subset \sX \ar[r]^{\ \ \ \ \ \pi}& U & \{0\}\ar@{_{(}->}[l]}
\end{equation}
where
\begin{itemize}
\item[(i)] all morphisms are projective;
\item[(ii)] $V$ is a smooth quasi-projective surface, mapped surjectively and generically finitely onto $U$ by $\varphi$;
\item[(iii)] $E$ is a connected component of $\varphi^{-1}(0)$;
\item[(iv)] $f:\sC/V\to \sR\subset \sX$ is a flat family of stable maps of genus $0$ to $\sR\subset \sX$ over $V$;
\item[(v)] $\sC_v$ is smooth for very general $\varphi(v)\in B$.
\end{itemize}

Notice that we must have
\begin{equation}\label{eq:rigid}
   f(\sC_e)=f(\sC_E)
\end{equation}
for all $e\in E$, where $\sC_E:=\sC\times_V E$.
Indeed if (\ref{eq:rigid}) did not hold, fix an irreducible component of $f(\sC_E)$: we would have a rationally chain connected surface passing through the very general point of this irreducible ruled divisor inside the IHS manifold $\mathcal{X}_0$, thus contradicting \cite[Theorem 1.3, item (ii)]{V16}. Notice that (\ref{eq:rigid}) is crucial as it corresponds to hypothesis (\ref{eq:rig}) in Proposition \ref{prop:CGL}. It is automatic in the $2$-dimensional case, as a (complex) $K3$ surface cannot be covered by rational curves, while it requires the finer analysis done in \cite{V16} in higher dimensions.

\begin{claim}
The curve $\sC_e$ is smooth for all $e\in E$.
\end{claim}
By stability, if this were not the case we could write $\sC_e=G_1+G_2+G_{more}$,  where $G_1,G_2$  are two irreducible components of $\sC_e$ such that $f_*(G_i)\not=0,\ i=1,2$. We could then apply Proposition \ref{prop:CGL} with $Y=\sC_E$ and get an integral curve $F\subset V$ such that $\varphi_* F\not= 0$, $E\cap F\not=\emptyset$ and $\sC_p$ is singular for all $p\in F$.  Therefore let us write $\sC_p=M_p+ N_p$ for all $p\in F$ where $M_p$ and $N_p$ are union of irreducible components of $\sC_p$ with $f_*M_p\not=0\not=f_*N_p$. In particular, for $p\in E\cap F$ we would have $[f_*M_p]+ [f_*N_p]=\alpha_0$. 
Here we denoted by $[f_*M_p]$ and $[f_*N_p]$ the cohomology classes of $1$-cycles in $\mathcal{X}_p$ and $\alpha_0$ is as in (\ref{eq:alpha}).
As $\alpha_0$ is primitive, the two components $[f_*M_p],\, [f_*N_p]$ generate a rank two $\Z$-module. By hypothesis, the dual of $\Lambda$ is given by algebraic classes of type $(2n-1,2n-1)$ on $X$ which therefore form a $\Z$-module of rank two.% which by hypothesis must be a finite index .% (qui intendiamo uno Z-modulo libero con una forma quadratica a valori in Q).
%{\color{red} attenzione: dobbiamo avere che la classe di C sia esattamente quella primitiva data da L!}

%On the other hand Lefschetz's theorem, cf e.g. \cite[Theorem]{Bad78}, implies that the Picard group of the ambient IHS manifold $X$ injects into that of the surface.
  This implies that the curve $F$ obtained by Proposition \ref{prop:CGL} coincides with the curve $B$ giving locally the Hodge locus of $\Lambda$ inside $U$. However, by hypothesis, the very general element of $B$ corresponds to non-singular curves, which contradicts item (v) of diagram (\ref{eq:diag}) and the Claim is proved.

As a consequence of the Claim above $f(\sC_e)$ is an integral curve for all $e\in E$, hence in particular for $X$ and the theorem is proved in this case.

We now deal with the general case $\rho(X)\geq 2$.
As before we consider an irreducible component $\sM_{\Lambda}$ of the lattice polarized moduli space $\sM_{\Lambda}$. For all lattices $\Sigma\subset \Lambda$ of corank one such that $L\in\Sigma$, we can pick a surface $U_\Sigma$ in the moduli space of $\Sigma$-polarized IHS manifolds and an irreducible curve $B_\Sigma$ in it such that $\Lambda\subset \Pic(\sX_b)$ for all $b\in B_\Sigma$ and $\Sigma\subset \Pic(\sX_u)$ for all $u\in U_\Sigma$. Moreover, we obtain a diagram identical to (\ref{eq:diag}) where all items (i)--(v) hold, as well as (\ref{eq:rigid}). We denote again with $\alpha$ a section of the local system $R^{4n-2}f_*\Z$ as in (\ref{eq:alpha}). 
We now make a careful choice of $\Sigma$ such that also the last part of the proof goes through.
We denote with $\Lambda^\vee$ the dual lattice of $\Lambda$ contained in $H_2(X,\Z)$.
We claim that there exists a primitive sublattice $\Sigma\subset \Lambda$ with $L\in\Sigma$, whose dual we denote with $\Sigma^\vee$ such that
\begin{enumerate}
\item[(j)] $\rk (\Sigma)=r-1< r=\rk(\Lambda)$.  
\item[(jj)] $\Sigma^\vee\cap \{ \sum_{i=1}^k a_i [\Gamma_i]:\! 0\leq a_i\leq n_i,\ a_i\in \mathbb Z,\ 0<\sum_{i=1}^k a_i<\sum_{i=1}^k n_i\}= \emptyset .$
\end{enumerate}
Here, $\alpha_0=\sum_1^n \mu_i [\Gamma_i]$ with $\mu_i\in\Z^+$ and $\Gamma_i$ are irreducible and primitive components of $\alpha_0$, which represent a flat limit of the curves ruling the divisors $D_b$ for $b\in B_\Sigma$ very general. 
The proof of the above claim is exactly as in the case of $K3$ surfaces, the fact that we are considering a lattice with a $\Q$-valued form instead of $\Z$-valued does not play a role, cf. \cite[Proof of Theorem 7.1]{CGL}. 

Then the rest of the proof goes as in the $\rho=2$ case: for all $p\in F$ we write $\sC_p=M_p+N_p$ such that $f_*M_p\neq 0\neq f_*N_p$ and by construction we have
$$
 0<[f_* M_p],\ [f_*N_p] <\alpha_0.  
$$
Hence, by item (jj) above $[f_* M_p]$ and $[f_*N_p] $ do not belong to $\Sigma^\vee$. 

By item (j) we have $\Pic(\sX_{\varphi(p)})\supset \Lambda,\ \forall p\in F$. Therefore $B=\varphi(F)$ which yields a contradiction to item (v), since $\sC_p$ is smooth for $\varphi(p)\in B$ very general.
\end{proof}

\section{Applications}
The aim of this section is to prove the two items of Theorem \ref{thm:main}. 
The proof of Theorem \ref{thm:main}, item (1),  will make use of the following lemma:
\begin{lemma}\label{lem:densità}
    Let $\mathcal{M}$ be a component of the moduli space of $\Lambda$ lattice-polarized IHS manifold of $K3^{[n]}$ or generalized Kummer type, where $\Lambda$ is an even indefinite lattice of rank between 2 and $b_2-3$. Then the locus of points of $\mathcal{M}$ which are birational to a Hilbert scheme of points on a K3 surface or to a generalized Kummer manifold is dense.  
\end{lemma}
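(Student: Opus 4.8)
The plan is to work period-theoretically. Recall (see \cite{Dol96, BCS_class}) that, via the period map and local Torelli, the component $\mathcal M$ is identified with a dense open subset of $\Gamma\backslash\Omega_\Lambda$, where $\Omega_\Lambda$ is the period domain of the lattice $T:=\Lambda^{\perp}\subset L$, $L$ being the Beauville--Bogomolov lattice of the relevant deformation type and $\Gamma$ the image of the monodromy group. Since $2\le\rk\Lambda\le b_2-3$, the lattice $T$ has signature $(2,\rk T-2)$ with $\rk T\ge 3$, so $\Omega_\Lambda$ is a connected complex manifold of dimension $\rk T-2\ge 1$; it therefore suffices to produce a \emph{dense} set of periods $\omega$ for which the corresponding manifold $X_\omega$ is birational to a Hilbert scheme of points on a projective $K3$ surface (when $L=\Lambda_{K3^{[n]}}$), resp.\ to a generalized Kummer manifold (when $L$ is the $K_n$-lattice).

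Fix once and for all a primitive class $\delta_0\in L$ with $\delta_0^2=-2(n-1)$ and $\div_L(\delta_0)=2(n-1)$ (resp.\ $\delta_0^2=-2(n+1)$ and $\div_L(\delta_0)=2(n+1)$); such a class exists, e.g.\ the generator of the standard rank-one summand of $L$, and a short discriminant-group computation gives that $\delta_0^{\perp}\subset L$ is the unique even unimodular lattice of its signature, namely $U^3\oplus E_8(-1)^2$, resp.\ $U^3$. The key input from the theory is that a projective $X$ of the relevant deformation type is birational to $\mathrm{Hilb}^n$ of a projective $K3$ surface, resp.\ to a generalized Kummer manifold, as soon as $\Pic(X)\subset H^2(X,\Z)$ contains a primitive class $\delta$ of the same square and divisibility as $\delta_0$: indeed $\delta^{\perp}\subset H^2(X,\Z)$ is then a weight-two Hodge structure of $K3$ (resp.\ abelian-surface) type on $U^3\oplus E_8(-1)^2$ (resp.\ $U^3$), hence by surjectivity of the period map equals $H^2(S,\Z)$ for a projective $K3$ surface $S$ (resp.\ $H^2(A,\Z)$ for an abelian surface $A$, projectivity being forced since $X$, hence also $\mathrm{Hilb}^n S$ resp.\ $K_n(A)$, is projective), and the resulting Hodge isometry $H^2(X,\Z)\cong H^2(\mathrm{Hilb}^n S)$ resp.\ $H^2(K_n(A))$ is, after composition with a monodromy operator, a parallel-transport operator, so is induced by a birational map by the Torelli theorem \cite{Mar11, Huy99}.

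It thus remains to show that the locus of $\omega\in\Omega_\Lambda$ for which $X_\omega$ acquires such a class is dense; this is a refinement of the classical density of Noether--Lefschetz loci. Fixing a marking $H^2(X,\Z)\cong L$ with generic Picard lattice $\Lambda$ and $T=\Lambda^{\perp}$, and writing $\gamma=\gamma_\Lambda+\gamma_T$ for the rational orthogonal components of a class $\gamma\in L$, one has $\omega\perp\gamma\iff\omega\perp\gamma_T$ because every period is orthogonal to $\Lambda$; and if $\gamma_T\ne0$ with $\gamma_T^2<0$, then $\{\omega\in\Omega_\Lambda:\omega\perp\gamma\}$ is a nonempty period subdomain of codimension one, on whose generic point $\gamma$ becomes algebraic. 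Given any $\omega_0\in\Omega_\Lambda$, let $P_0\subset T_\R$ be the associated positive-definite plane; as $\rk T\ge3$, the negative-definite space $P_0^{\perp}\cap T_\R\subset L_\R$ is nonzero, hence contains a vector $w$ with $w^2=\delta_0^2$. By the standard fact that the integral primitive vectors of a fixed negative square and fixed divisibility are dense, up to scaling, on the corresponding real quadric in $L_\R$ (the mechanism behind density of Heegner divisors, valid since $L$ is indefinite of rank $\ge 3$), one may choose $\gamma$ in the $O(L)$-orbit of $\delta_0$ arbitrarily close to $w$; then $\gamma_T\ne0$, $\gamma_T^2<0$ and $\gamma$ is almost orthogonal to $P_0$, so that---using $\dim\Omega_\Lambda\ge1$ and the implicit function theorem---one finds a positive-definite plane $P\subset\gamma^{\perp}\cap T_\R$ arbitrarily close to $P_0$, i.e.\ a period $\omega\in\Omega_\Lambda$ near $\omega_0$ with $\omega\perp\gamma$. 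For such $\omega$ the class $\gamma\in\Pic(X_\omega)$ has square $\delta_0^2$ and divisibility $\div_L(\delta_0)$, so $X_\omega$ is birational to a Hilbert scheme, resp.\ a generalized Kummer, by the previous paragraph. Projecting back to $\mathcal M$, which is a dense open of $\Gamma\backslash\Omega_\Lambda$ with the period map dominant and a local isomorphism, yields the density claimed.

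The main obstacle is the density step: it does not suffice that the Picard number jumps on a dense set (the classical statement); one needs the acquired class to have the prescribed divisibility $2(n-1)$, resp.\ $2(n+1)$---equivalently, \emph{unimodular} orthogonal complement---and to check that there remain enough such classes, spread densely, to approximate every period. This is precisely where the hypothesis $\rk\Lambda\le b_2-3$, giving $\rk T\ge3$ and hence room to perturb, is used (the excluded case $\rk\Lambda=b_2-2$ gives a zero-dimensional $\Omega_\Lambda$). A secondary technical point, sharper in the generalized Kummer case where the monodromy group is a proper subgroup of $O^{+}(L)$, is to justify the passage to a parallel-transport operator in the second paragraph: this may require distinguishing the finitely many $O(L)$-orbits of $\delta_0$ of the given square and divisibility, and working with one (or all) of those realized by monodromy.
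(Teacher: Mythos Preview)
Your approach differs substantially from the paper's. There the authors do not argue density directly: they invoke \cite[Corollary~3.18]{MPdens}, which reduces the statement to showing that the locus in question is merely \emph{non-empty}, and then verify non-emptiness by a short lattice argument (distinguishing whether some monodromy translate of the class $\ell$ already lies in $\Lambda$, and if not passing to the saturation of $\langle\Lambda,\ell\rangle$). Your route instead attempts density from scratch via a Heegner-divisor approximation.

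There are two genuine gaps. First, the Torelli step: you assert that once $\gamma\in\Pic(X)$ has $\gamma^2=-2(n-1)$ and $\div_L(\gamma)=2(n-1)$, the resulting Hodge isometry $H^2(X,\Z)\cong H^2(S^{[n]},\Z)$ becomes a parallel-transport operator ``after composition with a monodromy operator''. But parallel-transport operators between two fixed marked manifolds form a $\Mon^2$-torsor, so composing with monodromy cannot promote a non--parallel-transport isometry to one. Concretely, writing $\gamma=a\ell+2(n-1)v'$ with $v'\in\ell^\perp$, one gets $\gamma_*=a\,\ell_*$ in $A_L\cong\Z/2(n-1)$; the condition $\gamma^2=-2(n-1)$ gives only $a^2\equiv1\pmod{2(n-1)}$, whereas $\gamma$ lies in the $\Mon^2$-orbit of $\ell$ (recall $\Mon^2$ acts on $A_L$ as $\pm\id$ in the $K3^{[n]}$ case) iff $a\equiv\pm1\pmod{2(n-1)}$. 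These differ already for $n=5$, $a=3$. Your flagged ``secondary technical point'' is therefore not secondary, and distinguishing $O(L)$-orbits does not fix it --- one needs the finer $\Mon^2$-orbits. In the Kummer case the discrepancy is worse.

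Second, the density input. The ``standard fact'' you invoke --- that primitive integral vectors of a \emph{fixed} square and divisibility are dense, up to scaling, on the real quadric --- is false as stated: such vectors lie on a fixed affine quadric $\{v^2=-2(n-1)\}$, and if $[\gamma]\to[w]$ with both on that quadric then necessarily $\gamma\to\pm w$, so only finitely many integral $\gamma$ can be close to a given interior point $w$. (Equivalently, as $|\gamma|\to\infty$ the projectivizations $[\gamma]$ accumulate only on the null cone.) What is true, and what you actually need, is the density of the associated Heegner divisors $H_\gamma$ in $\Omega_\Lambda$; but establishing this for the specific $\Mon^2$-orbit of $\ell$, rather than for arbitrary Noether--Lefschetz classes, is precisely the nontrivial content supplied by \cite{MPdens}. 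The paper's strategy of citing that result and reducing to non-emptiness thus sidesteps both pitfalls.
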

\begin{proof}
%   The proof of the lemma is given inside the first of the two proofs of \cite[Corollary 3.18]{MPdens}. Notice that the statement of  \cite[Corollary 3.18]{MPdens} itself is about the density of moduli spaces of sheaves (resp. their Albanese fibres), however its first proof proves more precisely the density of points birational to Hilbert schemes (resp. generalized Kummers). 

    By \cite[Corollary 3.18]{MPdens}, we only need to prove that the locus in the statement is non-empty. Notice that the statement of  \cite[Corollary 3.18]{MPdens} itself is about the density of moduli spaces of sheaves (resp. their Albanese fibres), however its first proof proves more precisely the density of points birational to Hilbert schemes (resp. generalized Kummers). 

    We prove non-emptyness for the $K3^{[n]}$ type, the proof for the generalized Kummer type is identical with the obvious modifications.
    We denote $L=U^{\oplus 3}\oplus E_8(-1)^{\oplus 2}\oplus \mathbb{Z}\ell$ with $\ell^2=-2(n-1)$. Observe that any sublattice $R\subset L$ containing $\ell$ splits as $\mathbb{Z}\ell\oplus \ell^{\perp_R}$, by \cite[Lemma 7.2]{GHS11} and \cite[Chapter 14, (0.2)]{Huy16}.
    A component $\mathcal{M}$ corresponds to an orbit of isometric embeddings
    $\Lambda\hookrightarrow L$; we fix one such embedding. If $\Lambda\subset L$ contains $\sigma(\ell)$ for some $\sigma$ in the $M$-polarized monodromy, by the observation above a very general point $p$ in the period domain of $\mathcal{M}$ corresponds to the same Hodge structure of the Hilbert scheme of points on some K3 surface $S$, whose Néron-Severi is isomorphic to $\ell^{\perp_\Lambda}$. By the surjectivity of the period map for marked IHS manifolds and the Global Torelli Theorem, there is a $\Lambda$-polarized IHS, birational to $S^{[n]}$, whose period point is $p$. By \cite[Theorem 4.6]{Huy99}, we know that birational $\Lambda$-polarized IHS correspond to inseparable points in the moduli space. Hence the non-emptyness of IHS birational to $S^{[n]}$ yields the non-emptyness of IHS isomorphic to an Hilbert scheme once we prove that 
    %$p$ is in the image of the $M$-polarized period map, or equivalently 
    the nef cone of $S^{[n]}$ has non-empty intersection with the copy of $\Lambda$ in $\Pic(S^{[n]})$: this is clear, since the two lattices are the same for the very general case.

    If instead $\sigma(\ell)\notin \Lambda$ for any $\sigma$, we consider the saturated $R$ of $\langle \Lambda,\ell\rangle$ and we repeat the argument. Note that $R$ is in the form $N\oplus \mathbb{Z}\ell$, by the same lattice-theoretic argument as above. In this case the Néron-Severi group of $S$ is isometric to $N$;
    its intersection with the nef cone of $S^{[n]}$ lies then in the wall associated to the Hilbert-Chow contraction, hence any non-zero element in the intersection $\Lambda\cap N$, whose rank is at least one, yields a pseudo-polarization of the Hilbert scheme and we are done.
%    we need to prove that there is some rank three lattice $\Lambda\supset R\supset M$ such that $R$ contains $\sigma(\ell)$ for some $M$-polarized monodromy operator and $M^{\perp_R}$ is not generated by a class which becomes a wall divisor via a marking. \textcolor{red}{poi la dimostrazione funziona uguale, ma questa cosa va capita}
%
%    \textcolor{red}{P: tentando di ripetere la strategia precedente, il punto delicato è che il cono ampio intersechi $M$. Il primo tentativo sarebbe considerare $R=M\oplus \mathbb{Z}\ell$, ma in questo caso $M\subset Nef(S^{[n]})\subset \Pic(S^{[n]})$ sta tutta nel muro della contrazione HC.\\
%    La richiesta che $M$ contenga dei divisori ampi è ragionevole credo, ed è esplicitamente richiesta in \cite[definition page 5 arXiv version]{Dol96}. Però alla peggio si può provare a indebolire la prova di Item (1) considerando una famiglia in cui la fibra centrale è \textit{pseudo-ample polarized}, invece che ample. 
%    }
   %\textcolor{red}{GL: Ho riguardato il \cite[Corollary 3.18]{MPdens} e secondo me può andare bene cos\`i, perlomeno in prima instanza. Direi solo due parole sulla non vuotezza PER PIETRO.} 
\end{proof}

\begin{proof}[Proof of Theorem \ref{thm:main}, item (1)]
Let $\Lambda$ be a lattice of rank between two and $b_2(X)-3$ such that $\Lambda= \Pic(X)$ and there is an ample class contained in $\Lambda$. Let us consider the moduli space of $\Lambda$-polarized IHS manifold, and let $\mathcal{M}$ be a component that contains $X$ with the above given inclusion $\Lambda= \Pic(X)$. 
Thanks to Lemma \ref{lem:densità}, we can pick a point in the component of the moduli space in which $X$ lies and which parametrizes the
punctual Hilbert scheme $S^{[n]}$ of a projective $K3$ surface $S$ (resp. the generalized Kummer manifold $K_n(A)$ of an abelian variety).
We therefore have a family $f:\mathcal X\to B$ of projective IHS manifolds such that for a point $b_0\in B$ we have $X_0:=f^{-1}(b_0)\cong X$, for $b_1\in B$ we have $X_1:=f^{-1}(b_1)\cong S^{[n]}$ (resp. $\cong K_n(A)$) and $\forall b\in B$ the Picard number of $X_b:=f^{-1}(b)$ is at least 2. We know that $S^{[n]}$ contains infinitely many divisors ruled by integral rational curves (by taking infinitely many rational curves in $S$) and ditto for $K_n(A)$ (this time by the proof of \cite[Theorem 1.6, item (2)]{Reg}). By  \cite[Theorem 1.4]{Reg} we can regenerate these divisors and get infinitely many ample ruled divisors for very general $b\in B$. Then it suffices to apply the controlled degeneration principle Theorem \ref{thm:degprin} to conclude. 
\end{proof}
%{\color{red} What happens if the Picard rank is maximal? check that automorphisms are infinite}

Having Theorem \ref{thm:main}, \textit{item (1)} at hand, the proof of the second item follows a similar statement for $K3$ surfaces which can be found in \cite[Chapter 13, Theorem 3.1]{Huy16}. 
We start with the following. 

\begin{lemma}\label{lem:basechange}
    Let $X$ be a projective variety defined over a field $k$ and $k\subset K$ a field extension. Consider $X_K:=X\times_k K\to X$ and the pull-back map 
    $$\Pic(X)\to \Pic(X_K),\ L\mapsto L_K.$$ Then
    \begin{itemize}
        \item[1)] The pull-back map is  injective.  
        \item[2)] If $k$ is algebraically closed and $h^1(\mathcal{O}_X)=0$ the pull-back map is also surjective. 
    \end{itemize}
\end{lemma}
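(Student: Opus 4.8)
The plan is to treat this as a standard descent-type statement about line bundles under field extension, proving the two parts separately. For part (1), injectivity, I would first reduce to the case of a finitely generated extension $k \subset K$: any line bundle on $X_K$ is defined over a finitely generated subextension, and a line bundle trivial on $X_K$ is already trivial over some finitely generated intermediate field, so it suffices to handle $K$ finitely generated over $k$, and then by induction on transcendence degree plus a further step, the case of a finite extension and the case $K = k(t)$ purely transcendental. Alternatively, and more cleanly, I would argue directly: if $L \in \Pic(X)$ satisfies $L_K \cong \mathcal{O}_{X_K}$, then $h^0(X_K, L_K) \neq 0$ and $h^0(X_K, L_K^{-1}) \neq 0$; since cohomology commutes with the flat base change $\Spec K \to \Spec k$, we get $h^0(X, L) \neq 0$ and $h^0(X, L^{-1}) \neq 0$, which forces $L \cong \mathcal{O}_X$ (an effective divisor whose negative is also effective is zero). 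This avoids any Noetherian-reduction bookkeeping and is the route I would write up.

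For part (2), surjectivity under the hypotheses that $k = \bar k$ and $h^1(X, \mathcal{O}_X) = 0$, the key point is that $\Pic$ is representable and, thanks to the vanishing of $h^1(\mathcal{O}_X)$, the Picard scheme $\mathbf{Pic}_{X/k}$ is \'etale (zero-dimensional) over $k$, hence a disjoint union of copies of $\Spec k$ since $k$ is algebraically closed; more precisely $\mathbf{Pic}_{X/k}$ has trivial tangent space $H^1(X, \mathcal{O}_X) = 0$ at every point, so it is reduced of dimension $0$, i.e. its $k$-points and its $K$-points coincide for any field extension $K/k$. Thus every element of $\Pic(X_K) = \mathbf{Pic}_{X/k}(K)$ comes from an element of $\mathbf{Pic}_{X/k}(k)$. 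The one subtlety is the difference between $\mathbf{Pic}_{X/k}(k)$ and $\Pic(X)$: these agree because $X$ has a $k$-rational point (as $k$ is algebraically closed and $X$ is nonempty), which kills the obstruction in the exact sequence relating $\Pic(X)$ to $\mathbf{Pic}_{X/k}(k)$ coming from the Leray spectral sequence / Brauer group. I would state this and cite a standard reference (e.g. Kleiman's account of the Picard scheme in FGA Explained, or BLR's \emph{N\'eron Models}, Chapter 8).

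In our application $X$ is a projective IHS manifold over a subfield of $\C$, so $h^1(X, \mathcal{O}_X) = 0$ automatically and the lemma applies verbatim; over $\bar{\Q}$ one then gets $\Pic(X_{\bar\Q}) \xrightarrow{\sim} \Pic(X_\C)$, which is exactly what is needed to compare the geometry over $\bar{\Q}$ with the complex-analytic picture. The main obstacle, such as it is, is purely expository: deciding whether to run the elementary cohomological argument for part (1) and the Picard-scheme argument for part (2), or to phrase everything uniformly via $\mathbf{Pic}_{X/k}$; I would take the elementary route for (1) and the representability route for (2), since (2) genuinely needs the finiteness coming from $h^1(\mathcal{O}_X)=0$ and there is no way around invoking representability of the Picard functor there. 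No step requires a computation longer than a line.
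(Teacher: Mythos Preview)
Your proposal is correct and essentially matches the paper. For part (1) your direct cohomological argument via flat base change and the criterion $h^0(L)\neq 0\neq h^0(L^{-1})$ is exactly what the paper does; for part (2) the paper simply refers to \cite[Chapter 17, Lemma 2.2]{Huy16}, whose proof is the spreading-out-and-specializing incarnation of the same Picard-scheme discreteness you invoke (both arguments hinge on $h^1(\sO_X)=0$ forcing $\mathbf{Pic}_{X/k}$ to be \'etale over the algebraically closed field $k$).
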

\begin{proof}
    1) By flat base-change, for any $L\in \Pic(X)$ we have $H^i(X_K,L_K)=H^i(X,L)\otimes_k K$. We have that $L$ (respectively $L_K$) is trivial if and only  $h^0(X,L)\neq 0 \neq h^0(X,-L)$ (resp. $h^0(X_K,L_K)\neq 0 \neq h^0(X_K,-L_K)$) and the statement follows.\\ 
    2) The proof goes exactly as in \cite[Chapter 17, Lemma 2.2]{Huy16}.
\end{proof}
We have moreover the following:
\begin{lemma}\label{lem:finite_bad_points}
    Let $B$ a positive dimensional subscheme of a component $\mathcal{M}$ of the moduli space of polarized IHS manifolds with $b_2\geq 5$. Let $B_{fin}$ be the subset of $B$ corresponding to points $(X,H)$ such that $X$ has maximal Picard rank and $Bir(X)$ is finite. Then the set $B_{fin}$ is a finite set.
\end{lemma}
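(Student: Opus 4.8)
The plan is to show that $B_{fin}$ is cut out by countably many constraints, each of which is a proper closed condition, and that in fact the set of IHS manifolds with maximal Picard rank *and* finite birational automorphism group is itself contained in a countable union of proper closed subvarieties of $\mathcal{M}$ whose intersection with the curve $B$ is therefore finite. The key structural input is Remark \ref{rem_finite_bir}: by Kirschmer's result \cite{Kir25} (no reflective lorentzian lattices of rank $21$) together with the birational Morrison--Kawamata cone conjecture \cite[Theorem 6.25]{Mar11}, a manifold $X$ of $K3^{[n]}$ or generalized Kummer type with $\rho(X)$ maximal (namely $\rho = b_2-2$) automatically has $\Bir(X)$ infinite, unless the Picard lattice is reflective lorentzian of rank $\le 20$. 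So the condition ``$\rho(X)$ maximal and $\Bir(X)$ finite'' forces the Picard lattice $\NS(X)$ to lie in the (finite, by the classification of reflective lorentzian lattices in rank $\le 20$) list $\mathcal{L}$ of reflective lorentzian lattices of the appropriate signature.

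First I would recall that the Noether--Lefschetz/Hodge locus structure on $\mathcal{M}$ gives, for each even lattice $\Lambda$ of signature $(1,\rho-1)$ admitting a primitive embedding into the relevant Beauville--Bogomolov lattice (compatibly with the monodromy orbit defining $\mathcal{M}$), a closed analytic (indeed algebraic, after \cite{CDK}) subvariety $\mathcal{M}_\Lambda \subset \mathcal{M}$ parametrizing those $(X,H)$ with $\NS(X) \supseteq \Lambda$; when $\rho$ is maximal this locus is $0$-dimensional but there are only countably many such $\Lambda$ up to the monodromy action, one for each embedding. Next I would observe that every point of $B_{fin}$ lies in $\bigcup_{\Lambda} \mathcal{M}_\Lambda$ where the union runs over the finitely many isometry classes $\Lambda \in \mathcal{L}$ and their countably many monodromy-inequivalent embeddings; each such $\mathcal{M}_\Lambda$ is a proper closed subset of $\mathcal{M}$ of dimension $0$ (since $\rho$ is maximal), hence a countable set of points. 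Therefore $B_{fin} \subseteq B \cap \bigl(\bigcup_\Lambda \mathcal{M}_\Lambda\bigr)$ is contained in a countable subset of the curve $B$; but it is also a locally closed subset of the curve, being the intersection of $B$ with an analytic subset, so a countable such set must be finite. Alternatively, and more cleanly: since $B$ is an irreducible algebraic curve, $B \cap \mathcal{M}_\Lambda$ is either all of $B$ or finite for each $\Lambda$; it cannot be all of $B$ because $\Lambda$ has maximal rank while the very general point of $B$ has Picard rank equal to that of the polarization class (at most $\rho(X)$ for $[X]\in B$, which is $< b_2-2$ for a very general member of a positive-dimensional family), so each $B\cap \mathcal{M}_\Lambda$ is finite, and a positive-dimensional $B$ meets only finitely many of the $\mathcal{M}_\Lambda$ nontrivially — no, this last point needs care, so I keep the countability argument.

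The one subtlety to nail down is the passage from ``countably many points on an algebraic curve $B$'' to ``finitely many'': this is not automatic for an arbitrary countable subset, so I must use that $B_{fin}$ is not merely countable but a \emph{constructible} (or at least analytically constructible) subset of $B$. Here the hypothesis $b_2 \ge 5$ is used: it guarantees that the maximal Picard rank $b_2-2 \ge 3$ exceeds the Picard rank of a very general polarized member (which, for a positive-dimensional $B$ inside a component of the polarized moduli space, is $1$), so the locus of maximal Picard rank is a \emph{proper} Hodge subvariety, and within it the finite-$\Bir$ locus is carved out by the finitely many lattice conditions $\NS(X) \in \mathcal{L}$ from Kirschmer's finiteness. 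Each of these gives a proper closed subset $\mathcal{M}_\Lambda$; $B$ meets it in a proper closed — hence finite — subset; and because $\mathcal{L}$ is \emph{finite} and each lattice has only finitely many monodromy orbits of primitive embeddings into a lattice of bounded rank (a standard finiteness for embeddings of lattices into a fixed lattice), the union $\bigcup_{\Lambda}(B \cap \mathcal{M}_\Lambda)$ is a \emph{finite} union of finite sets, hence finite, and contains $B_{fin}$.

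\textbf{Main obstacle.} The crux is therefore twofold: (i) justifying that finite $\Bir$ together with maximal Picard rank pins $\NS(X)$ down to the finite list of reflective lorentzian lattices — this is exactly the content assembled in Remark \ref{rem_finite_bir}, citing \cite{Kir25} and \cite[Theorem 6.25]{Mar11}, and in particular one must confirm the rank bookkeeping (rank $21$ reflective lattices being excluded means $K3^{[n]}$-type manifolds, where $b_2=23$ so maximal $\rho=21$, can never have finite $\Bir$, while for generalized Kummer type $b_2=7$ the maximal $\rho=5$ and one invokes the classification of reflective lorentzian lattices in rank $5$); and (ii) the finiteness of monodromy-orbits of embeddings $\Lambda \hookrightarrow L$ for $\Lambda$ in this finite list, so that the Hodge locus $\bigcup_\Lambda \mathcal{M}_\Lambda$ is a finite — not merely countable — union of proper closed subsets, which is what makes its intersection with the curve $B$ finite. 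Everything else is bookkeeping with Hodge loci and the Global Torelli theorem.
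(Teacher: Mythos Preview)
Your overall architecture is close to the paper's, but the load-bearing step---``the list $\mathcal{L}$ of reflective lorentzian lattices of the appropriate signature is finite''---is exactly where the real work lies, and you have not done it. Nikulin's theorem \cite[Theorem 5.2.1]{Nik82}, which the paper invokes, only says that in each rank $\geq 3$ the reflective lorentzian lattices form finitely many isometry classes \emph{up to scaling}: reflectivity is preserved under $L \mapsto L(m)$, so a priori there are infinitely many candidate Picard lattices. (And there is no bound on $|\disc(\NS(X))|$ coming just from $\NS(X) \subset H^2(X,\Z)$; already for $K3$ surfaces with $\rho=20$ the discriminant is unbounded.) The paper closes this gap by observing that $\Bir(X)$ is commensurable with $O(\Pic(X))/W_{Pex}(\Pic(X))$, where $W_{Pex}$ is generated by reflections in stably prime exceptional classes, and that such classes have BBF-square bounded in absolute value by a constant depending only on the deformation class \cite[Proposition 6.2]{Mar11}. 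This bounds which rescalings of a given reflective lattice can actually occur as $\Pic(X)$ with $\Bir(X)$ finite, and turns Nikulin's finiteness-up-to-scaling into genuine finiteness of isometry classes. Without this step your argument does not terminate: you would be left with a countable union of (finite) Hodge loci, and your own paragraph wrestling with ``countable versus finite on a curve'' shows you noticed the difficulty.

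Two smaller points. First, your reliance on Kirschmer \cite{Kir25} is misplaced here: Kirschmer's result (no reflective lorentzian lattices of rank $21$) shows that for $K3^{[n]}$-type $B_{fin}$ is \emph{empty}, which is stronger for that case but says nothing about other deformation types---and the lemma is stated for arbitrary IHS with $b_2 \geq 5$, not only $K3^{[n]}$ or Kummer type. Second, once the finiteness of $\mathcal{L}$ is secured, your finishing argument via Hodge loci (each $\mathcal{M}_\Lambda$ with $\rk\Lambda$ maximal is a finite set of points, finitely many embeddings up to monodromy, finitely many $\Lambda$) is fine and essentially equivalent to the paper's route, which instead observes that the forgetful map $(X,H) \mapsto X$ has finite image (by the above) and finite fibers (by \cite[Theorem 0.1]{Huy18}).
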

\begin{proof}
    By the birational Morrison--Kawamata cone conjecture \cite[Theorem 6.25]{Mar11}, a IHS manifold with finite birational automorphism group has a reflective lorentzian Picard lattice. By \cite[Theorem 5.2.1]{Nik82}, in any given rank (at least three), these lattices fall into finitely many isometry classes, up to scaling. As we are restricting to IHS manifolds with $b_2\geq 5$, this applies to all lattices $\Pic(X)$ such that $(X,H)\in B_{fin}$ for some $H$. Furthermore, we have that $Bir(X)$ is commensurable with $O(\Pic(X))/W_{Pex}(\Pic(X))$, where $W_{Pex}(\Pic(X))$ is the group generated by reflections on stably prime exceptional divisors by \cite[Theorem 6.18, item (5)]{Mar11}. As stably prime exceptional divisors have bounded square in any deformation class as a consequence of \cite[Proposition 6.2]{Mar11}, for any reflective lorentzian lattice there is a finite number (possibly zero) of rescalings such that it corresponds to the Picard lattice of some $(X,H)\in B_{fin}$. This implies that the natural forgetful projection $B_{fin}\to \{\text{IHS manifolds}\}/\sim$ mapping $(X,H)$ to the isomorphism class of $X$ has finite image. However, by \cite[Theorem 0.1]{Huy18}, the fibers of this forgetful projection are finite, hence the lemma follows.
\end{proof}
Notice that, by \ref{rem_finite_bir}, in the case of manifolds of $K3^{[n]}$ type the set $B_{fin}$ is actually empty.
\begin{proof}[Proof of Theorem \ref{thm:main} item (2)]
Let $(X,H)$ be a polarized IHS manifold of $K3^{[n]}$ or Kummer type, which cannot be defined over $\overline{\Q}$. Let $\mathcal{M}$ be a component of the moduli space of polarized IHS manifolds containing $(X,H)$. We consider $\sM$ as the moduli space defined over $\overline{\Q}$, and the complex moduli space as its base change $\sM \times_{\overline{\Q}} \C$. As $X$  cannot be defined over $\overline{\Q}$, the point $(X,H)\in \mathcal{M}$ is not closed. We denote by $B$ its closure. 
Let $\mathcal{X}\to B$ be the associated family of polarized IHS manifolds over $B$.
By construction we have that $X\times_\C k(\eta)\cong X_\eta$, where $\eta\in B$ is the generic point. 

\begin{claim}
    There is a bijection between the ruled divisors in $X$ and those in $X_\eta$.
\end{claim}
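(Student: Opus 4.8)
The plan is to reduce the claim, via Lemma \ref{lem:basechange}, to the assertion that ``being swept out by rational curves of bounded degree'' is insensitive to the base change $k(\eta)\hookrightarrow\C$. Write $X_{\overline\eta}:=X_\eta\times_{k(\eta)}\overline{k(\eta)}$ for the geometric generic fibre; since $X\iso X_\eta\times_{k(\eta)}\C\iso X_{\overline\eta}\times_{\overline{k(\eta)}}\C$, $\chr k(\eta)=0$, and $h^1(\mathcal O)=0$ on an IHS manifold, Lemma \ref{lem:basechange}(2) gives a canonical isomorphism $\Pic(X_{\overline\eta})\iso\Pic(X)$. As cohomology groups and intersection numbers are unchanged by (flat) base change, this isomorphism is compatible with the Beauville--Bogomolov--Fujiki form, with effectivity, and with the ample cone, and by Lemma \ref{lem:basechange}(1) it restricts to the inclusion $\Pic(X_\eta)\hookrightarrow\Pic(X)$. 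It then suffices to check that, under this identification, a class is represented by a divisor ruled by an integral rational curve on one side if and only if it is on the other, with the same degree.

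For the direction from $X_\eta$ to $X$ I would simply base change: if a prime divisor $D_\eta\subset X_\eta$ is swept out by a family of rational curves, then $D_\eta\times_{k(\eta)}\C$ is a divisor of $X$ covered by the base-changed family, so each of its finitely many prime components is a ruled divisor of $X$; distinct classes $[D_\eta]$ produce distinct components by Lemma \ref{lem:basechange}(1), and the same argument applied to the ruling curves shows the degree is preserved. For the direction from $X$ to $X_\eta$ I would spread out: a ruled prime divisor $D\subset X$ in a class $[D]$ coming, via the identification above, from a class $\mathcal L$ on $X_{\overline\eta}$ and covered by rational curves of degree $\le d$ witnesses the non-emptiness of a constructible subset of a relative Hilbert (or Kontsevich) scheme of finite type over $\overline{k(\eta)}$; since $\C$ is an algebraically closed extension of $\overline{k(\eta)}$, that subset is non-empty over $\C$ if and only if it is non-empty over $\overline{k(\eta)}$, so $X_{\overline\eta}$ carries a ruled divisor in the class corresponding to $[D]$, of the same degree. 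These two assignments are mutually inverse on prime components, giving the bijection between the ruled divisors of $X$ and those of the geometric generic fibre.

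The main obstacle is the last passage, from $X_{\overline\eta}$ down to $X_\eta$ itself. Here the class $\mathcal L$ is monodromy invariant -- it is the class of an effective, deformation-stable divisor on the family $\mathcal X\to B$ -- hence Galois invariant, and the Galois action on $\Pic(X_{\overline\eta})$ is through a finite group, so $\mathcal L$ lies in the image of $\Pic(X_\eta)$ up to the Brauer obstruction of $k(\eta)$, which for $K3^{[n]}$ and generalized Kummer type is harmless here. Alternatively -- and this is the route I would actually take, to avoid chasing Brauer classes -- one simply reads ``ruled divisor in $X_\eta$'' as ``ruled divisor in $X_{\overline\eta}$'': since neither the statement of Theorem \ref{thm:main}(2) nor its proof is affected by replacing the generic fibre with the geometric generic fibre, the second paragraph already closes the argument.
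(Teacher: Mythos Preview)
Your approach is different from the paper's. The paper's entire argument rests on one geometric fact that you never invoke: \emph{a ruled divisor on an IHS manifold is rigid in its linear system}. With this, the paper spreads the line bundle and the section cutting out the ruled divisor over a finite-type base $\Spec(A)$ and observes that rigidity forces the resulting family of ruled divisors to be locally constant; this produces a well-defined single ruled divisor on the other side, hence the bijection, and it works over $k(\eta)$ directly with no detour through $\overline{k(\eta)}$.

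Your route --- identify Picard groups via Lemma~\ref{lem:basechange}, then match ruled divisors class by class via non-emptiness of a constructible locus in a relative Hilbert/Kontsevich scheme --- correctly transfers \emph{existence} between $X$ and $X_{\overline\eta}$, but does not by itself give a bijection of divisors. The sentence ``These two assignments are mutually inverse on prime components'' is where the gap lies: your argument only shows that the locus of ruled divisors in a given class is non-empty over $\C$ iff it is non-empty over $\overline{k(\eta)}$. To make the two assignments well-defined and mutually inverse you need that this locus is $0$-dimensional, i.e.\ that a ruled prime divisor is unique in its linear system --- precisely the rigidity the paper uses. Once you add rigidity, your bijection follows and, as a bonus, the whole $X_\eta$ versus $X_{\overline\eta}$ discussion (and the Brauer aside, whose ``harmlessness'' you do not justify) dissolves, since rigidity lets you descend from $X$ to $X_\eta$ directly. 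If instead your last paragraph is meant to say that class-by-class existence already suffices for the application in Theorem~\ref{thm:main}(2), that is true --- distinct classes give distinct divisors --- but then you have proved what the application needs rather than the Claim as stated.
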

Indeed, as in \cite[Chapter 17, Lemma 2.2]{Huy16}, a line bundle $M$ on $X_\eta$ can be seen as a family of line bundles over $X$, parametrized by a base $\textrm {Spec}(A)$ (where $A$ is an algebra whose quotient field is $k(\eta))$. A section of $M$ giving a ruled divisor on $X_\eta$ gives therefore a family of sections over $\textrm {Spec(A)}$ giving a ruled divisor on each fibre $X_a\cong X$. However, as ruled divisors on an IHS manifold are rigid in their linear systems, these sections need to be locally constant, hence they correspond to a single ruled divisor on $X$.

\medskip

By Lemma \ref{lem:basechange}, the Picard number $\rho(X_\eta)$ of $X_\eta$ equals the Picard number of $X$. Therefore the locus
$$B_0=\{b\in B\text{ such that } \rho(\mathcal{X}_b) >\rho(X_\eta)\}$$
is a dense subset of $B$, as the family over $B$ is not isotrivial.

Therefore, by Theorem \ref{thm:main}, item (1), we know that for all points $b\in B_0$ that do not have maximal Picard rank there are countably many distinct ruled divisors, and we take a regeneration of them using \cite[Theorem 1.4]{Reg}. As we are regenerating divisors which are distinct, their regenerations are also distinct over the generic point $X_\eta$.
By the above Claim we obtain countably many distinct ruled divisors on $X$.\\
It remains to prove the theorem in the case when all points of $B_0$ have maximal Picard rank. If such a $(X_b,H_b)$, $b\in B_0$, has infinite birational automorphism group, we can still use the same argument as before, as by \cite{Reg} $X_b$ will have countably many distinct uniruled divisors. However, by Lemma \ref{lem:finite_bad_points}, the subset $B_{fin}$ of $B_0$ consisting of pairs $(X_b,H_b)$ with maximal Picard rank and finite birational automorphism group is finite, hence there are at least countably many points in $B_0\setminus B_{fin}$ and the Theorem follows.
\end{proof}

We record the following improvement of \cite[Theorem 1.6]{Reg} which will be needed in the proof of Theorem \ref{thm:mainbis}. 

\begin{theorem}\label{thm:rmk}
    Let $f:\mathcal{X}\to B$ be a family of projective IHS manifolds of $K3^{[n]}$ or the generalized
Kummer type, with a
 fibre $X_0$ such that $\rho(X_0)\geq 2$.  Then the very general fiber of $f$ contains infinitely many distinct ruled divisors.
\end{theorem}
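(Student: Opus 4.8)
The plan is to reduce to Theorem \ref{thm:main}, item (1), together with the controlled degeneration principle, by moving inside the base $B$ to reach a fibre of small Picard rank, and then handling the remaining extreme cases (maximal rank, or rank exactly $2$ with the whole family having constant Picard lattice) separately. First I would replace $B$ by an irreducible curve through the prescribed fibre $X_0$, so that we have a one-parameter family; this is harmless since it suffices to produce infinitely many ruled divisors on the very general fibre of a subfamily. Then I would distinguish according to the generic Picard rank $\rho_\eta$ of the family. If $2\le \rho_\eta \le b_2-3$, then the very general fibre $X_\eta$ has Picard lattice of this rank, and Theorem \ref{thm:main}, item (1), applies directly to give infinitely many ample ruled divisors on $X_\eta$, hence on the very general fibre.

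The remaining cases are when $\rho_\eta=1$, when $\rho_\eta\ge b_2-2$ (maximal or near-maximal), or when the family is, after base change, a family of lattice-polarized IHS manifolds with fixed Picard lattice $\Lambda$ of rank exactly $2$ (this last is really the hypothesis of Theorem \ref{thm:degprin}). The case $\rho_\eta=1$ cannot occur under the hypothesis $\rho(X_0)\ge 2$: the locus in $B$ where the Picard rank jumps is a countable union of proper closed (Noether–Lefschetz) subsets, so if the very general fibre had $\rho=1$ then $X_0$, having $\rho(X_0)\ge 2$, would sit on such a locus, and after restricting $B$ to pass through $X_0$ and through a neighbouring Noether–Lefschetz locus containing it, we reduce to a family whose generic fibre has $\rho\ge 2$. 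So we may assume $\rho_\eta\ge 2$ always. If $\rho_\eta$ lands in $[2,b_2-3]$ we are done as above. If $\rho_\eta\ge b_2-2$, i.e. the very general fibre has maximal (or co-$1$) Picard rank, then by Remark \ref{rem_finite_bir} (for $K3^{[n]}$ type) resp. Lemma \ref{lem:finite_bad_points} (for Kummer type, after noting $\Bir$ is infinite off a finite set) such a manifold has infinite birational automorphism group, hence infinitely many distinct big and nef ruled divisors by \cite[Theorem 1.7]{Reg}; since the very general fibre is one such manifold, we conclude. Finally, if the family is $\Lambda$-polarized with $\rk\Lambda$ exactly $2$ and constant, we invoke Theorem \ref{thm:degprin} directly: it suffices to exhibit one fibre — which we get by combining Lemma \ref{lem:densità} with the regeneration principle \cite[Theorem 1.4]{Reg}, exactly as in the proof of Theorem \ref{thm:main}, item (1) — carrying a ruled divisor in a multiple of the polarization ruled by an integral primitive curve, and then Theorem \ref{thm:degprin} propagates infinitely many such divisors to $X_0$ and to the very general fibre.

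Assembling the pieces: after the reduction to $\rho_\eta\ge 2$, either $\rho_\eta\le b_2-3$ and Theorem \ref{thm:main}(1) applies to the generic fibre, or $\rho_\eta$ is near-maximal and \cite[Theorem 1.7]{Reg} applies, or we are in the fixed-rank-$2$ situation and Theorem \ref{thm:degprin} applies. In every case the very general fibre carries infinitely many distinct ruled divisors, proving the theorem.

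The main obstacle I expect is the reduction step that ensures $\rho_\eta\ge 2$: one needs to argue carefully that, starting from a family with a single distinguished fibre $X_0$ of Picard rank $\ge 2$, one can restrict to a subfamily (an irreducible curve in $B$ through $X_0$, possibly after base change) whose generic fibre still has Picard rank $\ge 2$. This uses that the Noether–Lefschetz locus where a given rank-$2$ sublattice of $\Pic(X_0)$ stays algebraic is a positive-dimensional (locally closed) subvariety of $B$ through $X_0$ — a standard fact about Hodge loci on IHS manifolds — but it must be set up so that the generic point of that Noether–Lefschetz locus indeed has Picard rank exactly $2$ (or exactly the rank of the chosen sublattice), which is again a countable-union-of-proper-subsets argument. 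Once the generic fibre is known to have rank $\ge 2$, the rest is a matter of routing through the already-established Theorem \ref{thm:main}(1), \cite[Theorem 1.7]{Reg}, and Theorem \ref{thm:degprin}.
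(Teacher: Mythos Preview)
Your approach has a genuine gap and misses the direct argument the paper uses.

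The claim that ``replacing $B$ by an irreducible curve through $X_0$ is harmless since it suffices to produce infinitely many ruled divisors on the very general fibre of a subfamily'' is incorrect. The very general fibre of a chosen sub-curve $C\subset B$ is a \emph{special} fibre of $f$, not a very general one; proving the statement there says nothing about the very general point of $B$. Compounding this, your reduction to $\rho_\eta\ge 2$ cannot be carried out in general: after you have replaced $B$ by a curve, the Noether--Lefschetz locus where a given rank-$2$ sublattice of $\Pic(X_0)$ survives is typically the single point $\{X_0\}$, so there is no positive-dimensional sub-locus to restrict to. The ``standard fact'' you invoke is about codimension in the \emph{moduli space}, not in an arbitrary base $B$. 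In the most typical situation (e.g.\ $B$ a curve whose very general fibre has $\rho=1$) your case analysis simply has no branch that applies.

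The paper's proof goes the other direction and is essentially two lines. One applies Theorem~\ref{thm:main}(1) directly to the \emph{special} fibre $X_0$ (which has $\rho(X_0)\ge 2$) to obtain infinitely many distinct ample ruled divisors on $X_0$, and then one regenerates each of these over $B$ via \cite[Theorem 1.4]{Reg}. The regenerations restrict to distinct ruled divisors on the very general fibre, and one is done. In other words, the correct flow of information is \emph{from} $X_0$ \emph{to} the general fibre via regeneration, not the reverse; your case analysis on $\rho_\eta$, the Noether--Lefschetz manoeuvre, and the appeal to Theorem~\ref{thm:degprin} are all unnecessary. You already had the right tool in hand (you cite \cite[Theorem~1.4]{Reg} in your last case) but applied it in the wrong place.
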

\begin{proof}[Proof of Theorem \ref{thm:rmk}]
By Theorem \ref{thm:main}, item (1), the fibre $X_0$ contains infinitely many distinct ample ruled divisors. 
By \cite[Theorem 1.4]{Reg} we can regenerate all of them.    
These regenerations remain distinct and irreducible when restricted to the very general fiber of the family and we are done.
\end{proof}

\begin{proof}[Proof of Corollary \ref{thm:mainbis}]
    Let $B$ be a curve inside $\mathcal{M}$. We claim that there is a point $0\in B$ with $\rho(X_{0})\geq 2$, actually more is true, the locus of these points is dense in $B$. To see the latter, one can proceed as in the classical case of K3 surfaces, see \cite[Chapter 6, Proposition 2.9]{Huy16}, as the proof only uses the local Torelli theorem and the structure of the period domain, which are analogous in the IHS case. % take the image of $B$ under the period map. Inside the period domain the loci of periods of IHS manifolds with Picard number at least $2$ are given by rational hyperplane sections. The intersection of all these hyperplane sections with the Zariski closure $\overline{B}$ of the image of $B$ cannot be all contained in the boundary $\overline{B}\setminus B$. Hence there is such a hyperplane intersecting $B$ and the claim is proved. 
    The result now follows from the combination of the first item of Theorem \ref{thm:main} and Theorem \ref{thm:rmk}. Alternatively one can also notice that, by Theorem \ref{thm:main}, item (i), the points of $B$ where we cannot conclude the existence of infinitely many amply ruled divisors are those defined over $\overline{\Q}$ and these are at most countable. 
\end{proof}

\section{From $\overline{\Q}$ to $\mathbb C$ and from weak to strong}

In this section we investigate some consequences of the regeneration principle for IHS manifolds defined over a field of characteristic zero. First of all we have the following:
\begin{proposition}\label{prop:Qbarra_rho1}
Let $\mathbb{K}$ be an algebraically closed field of characteristic zero. Then there are, in every deformation class, IHS manifold defined over $\mathbb{K}$ of Picard rank one.
\end{proposition}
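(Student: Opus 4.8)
\textbf{Proof plan for Proposition \ref{prop:Qbarra_rho1}.} The plan is to reduce immediately to the case $\K = \overline{\Q}$ and then produce, in each deformation class, a single IHS manifold over $\overline{\Q}$ with $\rho = 1$; the general $\K$ follows by base change from $\overline{\Q}$, since Lemma \ref{lem:basechange} shows the pull-back map on Picard groups is an isomorphism whenever the base field is algebraically closed and $h^1(\mathcal{O}_X) = 0$ (which holds for IHS manifolds), so the Picard rank is unchanged under extension of algebraically closed fields of characteristic zero. Thus it suffices to work over $\overline{\Q}$.

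Over $\overline{\Q}$ the strategy is a specialization/countability argument in moduli, entirely parallel to the classical $K3$ case. Fix a deformation class and let $\mathcal{M}$ be the corresponding moduli space of polarized IHS manifolds, regarded as a scheme over $\overline{\Q}$; its geometric generic fibre (over $\C$) has $\rho = 1$, i.e. the very general complex point of $\mathcal{M}$ has Picard rank one. The key point is that the Noether--Lefschetz-type loci where $\rho$ jumps are a \emph{countable} union of proper closed subvarieties of $\mathcal{M}$ (each cut out by the vanishing of a Hodge class, by the local Torelli theorem and the structure of the period domain — the same input used in the proof of Corollary \ref{thm:mainbis}). Now $\mathcal{M}$, being a variety over $\overline{\Q}$, has countably many $\overline{\Q}$-points, but a positive-dimensional variety over $\overline{\Q}$ cannot be covered by countably many proper closed subsets \emph{unless} one is careful: the cheap way to see there is a $\overline{\Q}$-point off all the jumping loci is to invoke \cite[Theorem 1.1]{Maulik12} (the result attributed to Charles in the acknowledgements), which provides, over $\overline{\Q}$, points of the relevant moduli space with Picard rank one. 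Concretely one cites Maulik's theorem on the jump of Picard rank in families of $K3$ surfaces (and its extension to IHS manifolds of the given type), together with the fact that a Hilbert scheme $S^{[n]}$ of a $K3$ surface $S$ with $\rho(S) = 1$ has $\rho(S^{[n]}) = 1$, and that $S$ with $\rho = 1$ defined over $\overline{\Q}$ exists by Maulik; the same works for generalized Kummer manifolds using abelian surfaces with $\NS$ of rank one over $\overline{\Q}$.

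So the steps, in order, are: (1) reduce to $\K = \overline{\Q}$ via Lemma \ref{lem:basechange}; (2) reduce from a general IHS manifold in the deformation class to a distinguished birational model — a Hilbert scheme $S^{[n]}$ or a generalized Kummer $K_n(A)$ — using that $\rho$ is a birational invariant and that Picard rank one is preserved under $S \mapsto S^{[n]}$ and $A \mapsto K_n(A)$; (3) invoke \cite[Theorem 1.1]{Maulik12} to produce a $K3$ surface (resp. abelian surface) of Picard rank one defined over $\overline{\Q}$; (4) conclude that the associated $S^{[n]}$ (resp. $K_n(A)$) is an IHS manifold over $\overline{\Q}$ with $\rho = 1$ in the prescribed deformation class, and finally base-change back up to $\K$.

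The main obstacle is step (3): one needs the existence of a $K3$ (or abelian) surface of geometric Picard rank one that is \emph{defined over $\overline{\Q}$}, not merely over $\C$. Over $\C$ this is trivial (the very general surface works), but descending to $\overline{\Q}$ is exactly the subtle point — a countable union of proper subvarieties of $\mathcal{M}_{\overline{\Q}}$ could a priori exhaust all $\overline{\Q}$-points, so one genuinely needs an input like Maulik's theorem (or an explicit family, e.g. van Luijk's quartics over $\Q$ as alluded to after Theorem \ref{thm:main}) guaranteeing that the rank does not jump at some arithmetically defined point. Everything else — the base-change lemma, the behaviour of $\rho$ under Hilbert schemes and generalized Kummers, and the density of the jumping loci — is routine.
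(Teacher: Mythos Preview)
Your approach has a genuine mathematical error and is also incomplete.

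The error is in step (2): you assert that ``Picard rank one is preserved under $S \mapsto S^{[n]}$ and $A \mapsto K_n(A)$''. This is false. For a $K3$ surface $S$ one has $\Pic(S^{[n]}) \cong \Pic(S) \oplus \Z\delta$, where $2\delta$ is the class of the exceptional divisor of the Hilbert--Chow morphism, so $\rho(S^{[n]}) = \rho(S)+1$. Thus a $K3$ with $\rho=1$ over $\overline{\Q}$ yields $S^{[n]}$ with $\rho=2$, not $\rho=1$. The same phenomenon occurs for generalized Kummers. So step (4) does not deliver what you need.

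Even if this were patched, your strategy only addresses the $K3^{[n]}$ and generalized Kummer deformation classes, whereas the statement is for \emph{every} deformation class (including OG6, OG10, and in principle any future class).

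The paper's proof avoids both problems by applying \cite[Theorem 1.1]{Maulik12} directly to a component $\mathcal{F}$ of the moduli space of polarized IHS manifolds in the given deformation class. These moduli spaces are defined over $\overline{\Q}$ (indeed over any field of characteristic zero, by Andr\'e), so one may base-change $\mathcal{F}$ to $\K$ and take any family whose geometric generic fibre has Picard rank one. Maulik's theorem then furnishes a closed $\K$-point with the same Picard rank. In particular there is no need to reduce to $\overline{\Q}$ first, no need to pass through $K3$ or abelian surfaces, and no reliance on a specific deformation type.
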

\begin{proof}
    The proof is based on \cite[Theorem 1.1]{Maulik12}, which states that if we can find a family defined over $\mathbb{K}$, then there are closed points whose Picard rank equals that of the geometric generic fibre. To obtain our claim, it suffices to consider any component $\mathcal{F}$ of the moduli space of polarized IHS manifolds. As these moduli spaces are defined over $\overline{\Q}$ (actually over any field of characteristic zero by \cite[Section 2.3.4]{Andr96}, therefore we can consider the moduli space $\mathcal{F} \times_{\overline{\Q}} \mathbb{K}$ and any family over a subset of $\mathcal{F}$ whose geometric generic fibre has Picard rank one. Therefore, our claim follows.
\end{proof}
We will need to produce regenerations of divisors, so let us recall the condition under which the regeneration principle was proven in \cite{Reg}:

\begin{hypothesis}\label{hyp:curve che rigano}
    Let $X$ be a projective IHS manifold. There exists a constant $d\geq 0$ such that  all primitive ample curve classes $[C]\in H_2(X,\Z)$ satisfying $q(C)> d$   have a connected and
rational representative $R\in [C]$ such that $R$ rules a prime divisor of class proportional to $[C]^\vee$.
\end{hypothesis}

The final aim of this section is to prove Theorem \ref{thm:weak_to_strong}, which will follow from the following more general yet more technical statement:

\begin{theorem}\label{thm:weak_to_strong_technical}
    Let $\mathcal{M}$ be a component of the moduli space of polarized IHS manifolds and let $(\mathcal{X},\mathcal{H})\to \mathcal{M}$ be the universal family over it. Suppose moreover that Conjecture \ref{conj:weak} (respectively Conjecture \ref{conj:strong}) and Hypothesis \ref{hyp:curve che rigano} hold for all $\overline{\Q}$ points of $\mathcal{M}$. Then Conjecture \ref{conj:weak} (respectively \ref{conj:strong} for the polarization $\mathcal{H}_t)$ holds for all points $t$ of $\mathcal{M}$.
\end{theorem}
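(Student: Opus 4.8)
The plan is to run the argument of Theorem \ref{thm:main}, item (2), with each appeal to Theorem \ref{thm:main}, item (1), replaced by the standing assumption that Conjecture \ref{conj:weak} (resp.\ Conjecture \ref{conj:strong}) and Hypothesis \ref{hyp:curve che rigano} hold at every $\overline{\Q}$-point of $\mathcal M$. Fix $t\in\mathcal M$ and put $(X,H):=(\mathcal X_t,\mathcal H_t)$. If $X$ is defined over $\overline{\Q}$, i.e.\ $t$ is a closed point, there is nothing to do. Otherwise set $B:=\overline{\{t\}}\subset\mathcal M$, an irreducible positive-dimensional subvariety defined over $\overline{\Q}$ with generic point $\eta$, and let $g:\mathcal Y\to B$ be the restricted universal family. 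After choosing an embedding $k(\eta)\hookrightarrow\C$ we have $X\cong X_\eta\otimes_{k(\eta)}\C$, and after a harmless finite base change of $B$ (its closed points remain $\overline{\Q}$-points and its generic fibre is still a base change of $X$) I may assume that $\rho(X_\eta)$ already equals $\rho(X_{\bar\eta})=\rho(X)=:\rho_0$, using Lemma \ref{lem:basechange}. Since $X$ is not defined over $\overline{\Q}$, the family $g$ is not isotrivial, so the Picard rank jumps on a dense subset of $B$; more importantly, by \cite[Theorem 1.1]{Maulik12} the base $B$ contains a $\overline{\Q}$-point $b_0$ with $\rho(X_{b_0})=\rho(X_{\bar\eta})=\rho_0$.

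From now on fix such a $b_0$ (for Conjecture \ref{conj:strong} any closed point $b_0$ would do). Since $X_{b_0}$ is defined over $\overline{\Q}$, the assumption provides an infinite sequence of integral rational curves $R_i\subset X_{b_0}$, each ruling a prime divisor $D_i$, with $D_i$ ample and $\deg_{\mathcal H_{b_0}}R_i\to\infty$ in the weak case, resp.\ $D_i\sim m_i\mathcal H_{b_0}$ with $m_i\to\infty$ in the strong case; moreover Hypothesis \ref{hyp:curve che rigano} holds for $X_{b_0}$. As $\rho(X_{b_0})=\rho(X_\eta)$, the specialization map identifies $\Pic(X_{b_0})$ with $\Pic(X_\eta)$ compatibly with the parallel transport on the ambient cohomology lattice; since the ample cone of an IHS manifold is cut out inside the positive cone by the walls orthogonal to the MBM classes, and the MBM classes form a deformation invariant of $\mathcal M$, this identification carries the ample cone of $X_{b_0}$ onto that of $X_\eta$. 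In particular each $[D_i]$ lies in $\Pic(X_\eta)$, hence extends to a section of $R^2 g_*\Z$ that is of Hodge type $(1,1)$ everywhere on $B$ (in the strong case $[D_i]=m_i[\mathcal H]$ is already monodromy invariant; in general the Hodge locus of $[D_i]$ is closed and contains $\eta$, hence equals $B$).

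I would then apply the regeneration principle \cite[Theorem 1.4]{Reg} to $g:\mathcal Y\to B$ with degenerate fibre $X_{b_0}$ --- which is legitimate because Hypothesis \ref{hyp:curve che rigano} holds at the $\overline{\Q}$-point $b_0$ (and, by a constructibility argument, at $X_\eta$ as well, since it holds at all $\overline{\Q}$-points of $B$) --- once for each $i$. It produces, over the complement in $B$ of a countable union of proper closed subvarieties and hence on the generic fibre $X_\eta$, a prime divisor $\widetilde D_i$ ruled by a rational curve $\widetilde R_i$, with $[\widetilde D_i]$ the parallel transport of $[D_i]$ up to a positive rational multiple. Parallel transport preserves the Beauville--Bogomolov--Fujiki form and intersection numbers, so the $\widetilde R_i$ have unbounded $\mathcal H_\eta$-degree and the $\widetilde D_i$ are pairwise distinct; by the ample-cone identification, $\widetilde D_i$ is ample on $X_\eta$ in the weak case, while $\widetilde D_i\sim m_i\mathcal H_\eta$ in the strong case. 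Finally, the bijection between (integral) ruled divisors on $X$ and on $X_\eta$, proved exactly as the Claim in the proof of Theorem \ref{thm:main}, item (2) (rigidity of ruled divisors in their linear systems), together with the injectivity of $\Pic(X_\eta)\to\Pic(X)$ and the invariance of ampleness and of linear equivalence under $k(\eta)\subset\C$ (Lemma \ref{lem:basechange}), turns the $\widetilde D_i$ and the $\widetilde R_i$ into infinitely many distinct integral rational curves on $X$, each ruling an ample divisor, resp.\ a divisor $\sim m_i\mathcal H_t$ --- which is exactly Conjecture \ref{conj:weak} (resp.\ Conjecture \ref{conj:strong} for the polarization $\mathcal H_t$) for $\mathcal X_t$.

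The main obstacle is the positivity bookkeeping in the weak case: one must be sure that the regenerated divisors are genuinely ample on $X_\eta$, not merely big and nef. This is what forces the degenerate fibre $X_{b_0}$ to be chosen, via \cite[Theorem 1.1]{Maulik12}, with the same Picard rank as the generic fibre --- otherwise the ample ruled divisors guaranteed on $X_{b_0}$ need not even have classes in $\Pic(X_\eta)$, and those that do might fail to stay ample after parallel transport --- so that the ampleness transfer reduces to the coincidence of the two ample cones, resting in turn on the deformation invariance of MBM classes. For Conjecture \ref{conj:strong} this difficulty disappears: there only multiples of the polarization $\mathcal H$ occur, and these are monodromy invariant by construction, so any closed $\overline{\Q}$-point of $B$ serves as $b_0$.
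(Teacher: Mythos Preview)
Your argument diverges from the paper's in a way that reflects a misreading of what the regeneration from \cite{Reg} actually outputs. You invoke \cite[Theorem 1.4]{Reg} and assert that the regenerated divisor $\widetilde D_i$ has class equal to the parallel transport of $[D_i]$ up to a positive rational multiple. That is not how the principle works. The entire purpose of Hypothesis~\ref{hyp:curve che rigano} is to supply, for each ruling curve $C_i$ on the special fibre, a second rational curve of class $n_ih_b-[C_i]$ ruling a prime divisor; the union then has class $n_ih_b$, which extends over $B$ because $h$ is dual to the relative polarisation. So what \cite[Regeneration Principle 1.3]{Reg} hands you on the generic fibre is a divisor ruled by a curve of class $n_ih$, i.e.\ a divisor proportional to $\mathcal H_\eta$ --- not to the original $[D_i]$.

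This is exactly why the paper's proof is so short: one takes \emph{any} closed point $b\in B(\overline{\Q})$, applies the Regeneration Principle with Hypothesis~\ref{hyp:curve che rigano} as above, and obtains directly infinitely many ruled divisors in $|n_i\mathcal H_\eta|$ on the generic fibre, hence on $X_t$. There is no need for \cite{Maulik12} to match Picard ranks, no comparison of ample cones via MBM walls, and no positivity bookkeeping, because the regenerated divisors are positive multiples of the polarisation by construction. The paper even remarks that this proves something strictly stronger than the stated theorem: Conjecture~\ref{conj:weak} at $\overline{\Q}$-points already forces Conjecture~\ref{conj:strong} at all other points, whereas your route (even if the class-preservation claim were granted) yields only weak from weak.

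Your detour is not unsalvageable. Once you have chosen $b_0$ with $\rho(X_{b_0})=\rho(X_\eta)$ via \cite{Maulik12}, each class $[D_i]$ genuinely extends over $B$, and you could deform the ruling curves directly (as in \cite[Proposition 3.1]{CMP19}) rather than through the regeneration principle; in that variant Hypothesis~\ref{hyp:curve che rigano} plays no role and should be dropped. But as written you treat Hypothesis~\ref{hyp:curve che rigano} as a black-box precondition for a theorem whose output you have mis-described, and then pile on extra machinery to handle an ampleness issue that the paper's argument simply never meets.
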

\begin{proof}

  Let $t\in\mathcal{M}$. If $t$ is defined over $\overline{\Q}$, there is nothing to prove. So let us suppose it is not defined over $\overline{\Q}$. The corresponding polarized $(X_t,H_t)$ can be defined over a finite extension of $\overline{\Q}$, which implies that we can assume that $(X_t,H_t)$ is defined over the function field of an affine variety $B=\textrm{Spec}(A)$ over $\overline{\Q}$ ($B$ can of course be a point). We can therefore assume, eventually after shrinking the base, that we have a family $(\mathcal{X},\mathcal{H})\to B$ whose geometric generic fibre is $(X_t,H_t)$.
Let us pick a point $b\in B(\overline{\mathbb{Q}})$. By hypothesis, we have countably many distinct ruled divisor $\{D_i\}_{i\in\N}$ inside ${X}_b$. Without loss of generality we can suppose that only finitely many of them are not proportional to $H_b$, and we can remove these finitely many from the above set. Let us call $C_i$ the class of the rational curve ruling $D_i$, and let ${h}$ be the relative class of the dual curve to $\mathcal{H}$. For any $i$, there is a $n_i$ such that $n_i{h}_b-C_i$ is ample, primitive and has square satisfying hypothesis \ref{hyp:curve che rigano}. 
Therefore, we can apply \cite[Regeneration Principle 1.3]{Reg} to obtain a regeneration of $D_i$ over $B$, which by construction is ruled by a curve of class $n_i{h}$. Notice that the $D_i$ are all distinct and form a countable set, hence the regenerations we obtain are still countably many. As $(X_t,H_t)$ is the generic point of this family, the restrictions of these regenerations to $(X_t,H_t)$ are still countably many and Conjecture \ref{conj:strong} is proven for it. 
\end{proof}
Notice that what we actually prove is that Conjecture \ref{conj:weak} for $\overline{\Q}$ points implies Conjecture \ref{conj:strong} for all other points.
%Notice that in the above proof we might have $A=\mathbb{Q}$, hence the family we consider is a single point. In that case if the manifold $X_t$ has Picard rank one, Conjectures \ref{conj:strong} and \ref{conj:weak} are equivalent and the latter holds by hypothesis.% If on the other hand $X_t$ has Picard rank at least two, we are obtaining a %{\color{red} Attenzione, questo sembra dare una implicazione weak to strong usando la rigenerazione!}

\begin{proof}[Proof of Theorem \ref{thm:weak_to_strong}]
It suffices to use \cite[Proposition 2.1]{Reg}, which proves Hypothesis \ref{hyp:curve che rigano} for all manifolds of $K3^{[n]}$ or Kummer type. 
\end{proof}
\section{Existence from domination} \label{sec:dominati}
In this section we show how to use the main results of the paper to deduce similar results for other classes of varieties. We hope that this can be useful in other situations. The key observation is  the following.
    \begin{proposition}\label{prop:dominati}
    Let $f:Y\dashrightarrow X$ be a dominant map between normal projective varieties. 
    \begin{enumerate}
        \item If $Y$ contains infinitely many ruled divisors then $X$ does.
        \item Suppose moreover that $f$ is generically finite and  $X$ is $\Q$-factorial. If the divisors are movable and big on $Y$ the same holds on $X$.
    \end{enumerate} 
    \end{proposition}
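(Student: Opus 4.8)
The plan is to reduce to the case of a morphism and then transport ruled divisors along the (generically finite or merely dominant) map. First I would resolve the rational map: choose a resolution $p\colon \wt{Y}\to Y$, $g\colon \wt{Y}\to X$ with $g = f\circ p$ a genuine morphism, where $p$ is birational (and projective). Since $Y$ contains infinitely many distinct ruled divisors $D_i$, their strict transforms $\wt{D}_i = p^{-1}_*D_i$ on $\wt{Y}$ are again infinitely many distinct ruled divisors: each $D_i$ is dominated by a family of rational curves $R_i$ sweeping it out, and the general member of the family avoids the indeterminacy locus and the $p$-exceptional locus, so a general $R_i$ lifts isomorphically to a rational curve $\wt{R}_i\subset\wt{Y}$ ruling $\wt{D}_i$. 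Distinctness is preserved because $p$ is an isomorphism in codimension one. Hence it suffices to prove the statement for the morphism $g\colon\wt{Y}\to X$; replacing $Y$ by $\wt{Y}$, I may assume $f$ itself is a dominant morphism.

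For item~(1): for each ruled divisor $D_i\subset Y$ with ruling rational curves $R_i$, push forward. The image $f(D_i)\subset X$ is an irreducible closed subset; since $D_i$ is a divisor and $f$ is dominant, either $f(D_i)$ is a divisor or it has codimension $\geq 2$. In the first case, a general rational curve $R_i$ in the ruling maps to a rational curve $f(R_i)$ (the image of $\P^1$ under a nonconstant morphism, hence rational, possibly after normalization), and as $R_i$ moves these images sweep out $f(D_i)$, so $f(D_i)$ is a ruled divisor. In the second (``contracted'') case $D_i$ would have to lie in a fibre-type locus; I would argue this can happen for only finitely many values of $i$ — for instance, over the generic point of $X$ the fibre of $f$ has only finitely many components, or more simply: the locus in $Y$ where $\dim f^{-1}(f(y)) > \dim Y - \dim X$ is a proper closed subset, hence contains at most finitely many of the distinct prime divisors $D_i$. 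Discarding those finitely many, the remaining infinitely many $D_i$ map to infinitely many divisors in $X$; these images are still infinitely many distinct divisors because $f$ is finite-to-one on a general $D_i$ (for dominant $f$ one may pass to a further blow-up to make the restriction $D_i\to f(D_i)$ generically finite, or invoke that a divisor has only finitely many preimage divisors under a dominant morphism of the same, hence here greater, relative dimension — either way the fibre of $\{D_i\}\mapsto\{f(D_i)\}$ is finite).

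For item~(2): now $f$ is generically finite and $X$ is $\Q$-factorial, and the $D_i$ are movable and big on $Y$. Bigness of $f_*D_i$ is immediate since $f$ generically finite preserves volumes up to the (nonzero) degree: $\operatorname{vol}_X(f_*D_i) \geq \tfrac{1}{\deg f}\operatorname{vol}_Y(D_i) > 0$, or more elementarily, pulling back sections, $h^0(X, m f_*D_i)$ grows like $m^{\dim X}$. For movability: a movable divisor is one whose stable base locus has codimension $\geq 2$; I would take the proper transform $(f_*D_i)$ on $X$ and note that $f^* (f_*D_i) = D_i + E_i$ for some effective $f$-exceptional $E_i$ (using $\Q$-factoriality of $X$ to make sense of $f^*$), so the base locus of $|m(f_*D_i)|$ pulls back into the base locus of $|mD_i|$ together with $\operatorname{Supp}(E_i)$; since the former has codimension $\geq 2$ and the latter maps into a codimension $\geq 1$ locus downstairs but actually lies in the exceptional locus, pushing forward shows $\Bs|m(f_*D_i)|$ has codimension $\geq 2$ in $X$. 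Here I expect the main obstacle to be this last point: controlling the base locus of the pushed-forward linear system and correctly handling the exceptional divisors of the generically finite morphism, since $f_*$ does not commute naïvely with base loci and one must genuinely use $\Q$-factoriality to write $f^*(f_*D_i)$ and compare. The bookkeeping of which exceptional components get added, and verifying they do not create a codimension-one component in the downstairs base locus, is the delicate step; everything else is formal.
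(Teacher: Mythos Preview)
For item~(1) your approach is essentially the paper's: the paper's entire proof is the one line ``not all the infinitely many ruled divisors on $Y$ can be contained in the locus possibly contracted by $f$'', and you are spelling this out. One small overstatement: your dichotomy ``$f(D_i)$ is either a divisor or has codimension $\geq 2$'' fails when $\dim Y>\dim X$, since then $f(D_i)=X$ is a live possibility. The paper's one-liner is no more careful on this point, and in any case every application in the paper is to generically finite $f$, where your dichotomy is correct.

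For item~(2) the paper does not argue directly at all; it simply cites \cite[Lemma~4.3]{LMP21}. Your direct attempt contains a real error beyond the bookkeeping you flag. The formula $f^*(f_*D_i)=D_i+E_i$ with $E_i$ effective and $f$-exceptional is valid for \emph{birational} $f$, but not for generically finite $f$ of degree $>1$: if $D_i$ is prime with image a prime divisor $E$, then $f_*D_i=d\cdot E$ with $d=\deg(f|_{D_i})$, and $f^*E$ picks up every irreducible component of $f^{-1}(E)$. The components other than $D_i$ still dominate $E$, hence are \emph{not} $f$-exceptional, so your $E_i$ contains non-exceptional divisors mapping onto $f(D_i)$. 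This breaks the final step where you push $\operatorname{Supp}(E_i)$ down and claim it lands in codimension $\geq 2$. So preservation of movability under generically finite pushforward genuinely needs a different argument --- which is exactly what the cited lemma supplies.
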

    \begin{proof}
    (1) Not all the infinitely many ruled divisors on $Y$ can be contained in the locus possibly contracted by $f$. Hence the conclusion follows. 

    (2) This item follows immediately from \cite[Lemma 4.3]{LMP21}.
    \end{proof}

Let $X$ be a projective manifold of OG6 type having a divisor $E$ such that $q(E)=-2$ and $div(E)=2$, where $q$ is the Beauville-Bogomolov-Fujiki form.
    By \cite[Prop. 4.3]{Gro22}, $X$ is dominated by an IHS manifold $Y$, which is of $K3^{[3]}$ type and has Picard number at least $17$.

\begin{corollary}
    Let $X$ be a projective manifold of OG6 type having a divisor $E$ such that $q(E)=-2$ and $div(E)=2$. % where $q$ is the Beauville-Bogomolov-Fujiki form.
    If $Y$ has Picard number strictly smaller that $21$,
    then $X$ contains infinitely many big and movable ruled divisors.
\end{corollary}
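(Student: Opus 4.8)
The plan is to deduce the statement by combining the domination furnished by \cite[Prop.~4.3]{Gro22} with Theorem~\ref{thm:main}, item~(1), and with Proposition~\ref{prop:dominati}.

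First I would fix the geometric setup. By \cite[Prop.~4.3]{Gro22} there is an IHS manifold $Y$ of $K3^{[3]}$ type with $\rho(Y)\geq 17$ together with a dominant rational map $f\colon Y\dashrightarrow X$. Since $X$ is of OG6 type, both $X$ and $Y$ have dimension $6$, so $f$ is automatically generically finite; moreover $X$, being a smooth projective IHS manifold, is $\Q$-factorial, and (after choosing a polarization) $Y$ lies in a component $\mathcal{M}$ of a moduli space of projective IHS manifolds of $K3^{[3]}$ type. The second Betti number of a manifold of $K3^{[3]}$ type is $23$, whence the maximal Picard rank in this deformation class is $21$; under the hypothesis $\rho(Y)<21$ we thus have $2\leq 17\leq\rho(Y)\leq 20$, placing $Y$ exactly in the range covered by Theorem~\ref{thm:main}, item~(1). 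Hence $Y$ contains infinitely many distinct ample ruled divisors, and since an ample class is both big and contained in the interior of the movable cone, these are in particular infinitely many distinct big and movable ruled divisors on $Y$.

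Finally I would transfer this to $X$ using Proposition~\ref{prop:dominati}. Item~(1) gives infinitely many distinct ruled divisors on $X$: only finitely many of the ruled divisors of $Y$ can lie in the proper closed locus contracted by $f$, the images of the remaining ones are again ruled divisors, and since a generically finite map has finite fibres on the set of prime divisors, infinitely many distinct divisors of $Y$ yield infinitely many distinct divisors of $X$. Item~(2), applicable precisely because $f$ is generically finite and $X$ is $\Q$-factorial, shows that the big and movable property is inherited by $X$. Combining the two items yields infinitely many big and movable ruled divisors on $X$. The only point that deserves care is this last transfer — making sure the infinitely many divisors on $Y$ do not collapse to finitely many on $X$ — but that is exactly what Proposition~\ref{prop:dominati} is designed to handle, so no genuine obstacle remains; the hypothesis $\rho(Y)<21$ enters only in order to apply Theorem~\ref{thm:main}, item~(1), to $Y$.
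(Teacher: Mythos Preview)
Your proof is correct and follows exactly the same approach as the paper: apply Theorem~\ref{thm:main}, item~(1), to the dominating $K3^{[3]}$-type manifold $Y$ (using $17\le\rho(Y)<21$), then push the resulting ample (hence big and movable) ruled divisors down to $X$ via Proposition~\ref{prop:dominati}. Your added justifications (that $f$ is generically finite because $\dim Y=\dim X=6$, that $X$ is $\Q$-factorial, and that infinitely many distinct divisors on $Y$ cannot collapse to finitely many on $X$ under a generically finite map) are all correct and simply spell out what the paper leaves implicit.
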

\begin{proof}
    %By \cite[Prop. 4.3]{Gro22}, we have that $X$ is dominated by a manifold $Y$, which is of $K3^{[3]}$ type and has Picard rank at least $17$, therefore
    Conjecture \ref{conj:weak} holds for $Y$ and by Proposition \ref{prop:dominati} the result follows for $X$. 
\end{proof}

Let $X$ be a projective irreducible symplectic variety contained in tables 4,7 and 8 of \cite{BGMM}.
    By construction, each of them is dominated by a IHS manifolds $Y$ of $K3^{[n]}$ or of Kummer type.

    \begin{corollary}
Let $X$ be a projective irreducible symplectic variety contained in tables 4,7 and 8 of \cite{BGMM}, with the exceptions of the 34 items of tables 7 and 8 with rank 7. If the dominating variety $Y$ has Picard number strictly smaller than the maximal one, then $X$ contains infinitely many big and movable ruled divisors.      
    \end{corollary}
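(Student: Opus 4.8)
The plan is to deduce the corollary from Theorem~\ref{thm:main}, item~(1), applied to the dominating manifold $Y$, together with Proposition~\ref{prop:dominati}, item~(2), which transports the conclusion from $Y$ to $X$. First I would record the relevant features of the construction of \cite{BGMM}: for each of the entries of tables~4, 7 and~8 under consideration it produces a dominant rational map $f\colon Y\dashrightarrow X$ with $Y$ an IHS manifold of $K3^{[n]}$ or generalized Kummer type and $\dim Y=\dim X$, so that $f$ is automatically generically finite, and the target $X$ is $\Q$-factorial. The hypothesis that $\rho(Y)$ be strictly smaller than the maximal rank, combined with the exclusion of the $34$ rank~$7$ items of tables~7 and~8, serves precisely to place $\rho(Y)$ in the range $2\le\rho(Y)<b_2(Y)-2$ demanded by Theorem~\ref{thm:main}, item~(1).

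Granting this, the argument is short. By Theorem~\ref{thm:main}, item~(1), the manifold $Y$ carries infinitely many distinct ample ruled divisors; an ample divisor being in particular big and movable, $Y$ carries infinitely many distinct big and movable ruled divisors. Applying Proposition~\ref{prop:dominati}, item~(2) (legitimately, since $f$ is generically finite and $X$ is $\Q$-factorial), each of them yields a big and movable ruled divisor on $X$, and, because $f$ is generically finite, at most finitely many divisors of $Y$ can share a given image; hence the images form an infinite family of distinct big and movable ruled divisors on $X$, which is the assertion.

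The only non-formal ingredient is the entry-by-entry check that the construction of \cite{BGMM} really does furnish, for every non-excluded item of tables~4, 7 and~8, a generically finite dominating $Y$ of $K3^{[n]}$ or generalized Kummer type with $\Q$-factorial target $X$, and that for such $Y$ the Picard number satisfies $\rho(Y)\ge 2$ and, under the stated hypothesis, is not maximal; in particular one should confirm that the $34$ rank~$7$ items of tables~7 and~8 are exactly those for which this fails, i.e. for which Theorem~\ref{thm:main}, item~(1), cannot be invoked. I expect this bookkeeping --- reading the Picard data of the dominating manifolds off \emph{loc. cit.} --- to be the heaviest, though entirely routine, part.
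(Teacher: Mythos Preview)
Your overall strategy is correct and matches the paper's: apply Theorem~\ref{thm:main}(1) to the dominating manifold $Y$, then use Proposition~\ref{prop:dominati}(2) to push the big and movable ruled divisors down to $X$.

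The one place where you diverge from the paper is the verification that $\rho(Y)\ge 2$. You propose an entry-by-entry bookkeeping check from the tables of \cite{BGMM}, whereas the paper replaces this with a short conceptual argument. Each entry in those tables is a quotient $X=Y/G$; since the group $G$ preserves the symplectic form, the coinvariant sublattice of $H^2(Y,\Z)$ is orthogonal to $H^{2,0}$ and hence consists of algebraic classes. Together with a $G$-invariant ample class, this forces $\rho(Y)\ge 2$ as soon as the coinvariant sublattice is nonzero, i.e.\ as soon as the $G$-action on $H^2(Y)$ is nontrivial. The ``rank'' recorded in the tables is the rank of the invariant sublattice, so the coinvariant rank is $b_2(Y)-\text{rank}$; this is positive in every case except when $Y$ is of generalized Kummer type ($b_2=7$) and the listed rank is $7$, which is exactly the $34$ excluded items. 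Thus the exclusion is not about forcing $\rho(Y)<b_2(Y)-2$, as your phrasing suggests, but about guaranteeing $\rho(Y)\ge 2$; the upper bound is the separate hypothesis in the statement.

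Both approaches give the corollary, but the paper's argument explains \emph{why} the exclusion list is what it is and avoids any actual table-reading.
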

\begin{proof}
%    By construction, all the irreducible symplectic varieties of \cite{BGMM} are dominated by manifolds $Y$ of $K3^{[n]}$ or of Kummer type. 
If the dominating manifold $Y$ has Picard rank at least two, the claim follows by Theorem \ref{thm:main} applied to $Y$ and Proposition \ref{prop:dominati}. If the action of the group $G$ on $Y$ is non trivial on $H^2$, we have an invariant ample class and all coinvariant algebraic classes, therefore the Picard rank is at least two. This holds precisely for all elements in the statement: the rank of the coinvariant lattice is obtained as the difference between the second Betti number and the rank of the invariant part (contained in the above tables), and is positive unless $Y$ is of Kummer type and the rank is 7. 
\end{proof}

\begin{remark}
    Notice that the existence of infinitely many uniruled divisors for some manifolds of OG10 type was already observed in \cite[Proposition 4.4]{LMP21}.
\end{remark}

 \bibliography{literature}
\bibliographystyle{alpha}
\end{document}